\newtheorem{thm}{Theorem}
\newtheorem{defin}[thm]{Definition}
\newtheorem{prop}[thm]{Proposition}
\newtheorem{lem}[thm]{Lemma}
\newtheorem{ex}[thm]{Example}
\newcommand{\mbb}{\mathbb}
\newcommand{\mc}{\mathcal}
\newcommand{\problemtitle}[1]{\gdef\@problemtitle{#1}}
\newcommand{\probleminput}[1]{\gdef\@probleminput{#1}}
\newcommand{\problemquestion}[1]{\gdef\@problemquestion{#1}}
  \par\addvspace{.5\baselineskip}
  \par\addvspace{.5\baselineskip}
\title{Pareto-Optimal Linear Programming}
\author{Bart van Rossum$^{1,\star}$, Twan Dollevoet$^{2}$
\vspace{0.1cm}\\
\small{$^1$Operations, Planning, Accounting \& Control}\\ 
\small{Eindhoven University of Technology, The Netherlands}
\vspace{0.1cm}\\
\small{$^2$Econometric Institute}\\ 
\small{Erasmus University Rotterdam, The Netherlands}
\vspace{0.1cm}\\
\small{$^\star$Corresponding author} \\
\vspace{0.1cm} \\
\small{b.t.c.v.rossum@tue.nl, dollevoet@ese.eur.nl}
}
\date{}
\begin{document}

\tikzset{->-/.style={decoration={
  markings,
  mark=at position #1 with {\arrow{>}}},postaction={decorate}}}

\tikzstyle{block} = [rectangle, draw, 
 text centered, rounded corners, inner sep = 0.2cm]

\tikzset{->-/.style={decoration={
  markings,
  mark=at position #1 with {\arrow{>}}},postaction={decorate}}}
  
\maketitle

\begin{abstract}
\noindent 
Pareto-optimality plays a central role in evaluating the efficiency of solutions to allocation problems, such as house allocation, school choice, and kidney exchange. We introduce a general linear programming problem subject to Pareto-optimality conditions, which we call \textsc{Max-Pareto}. Using the novel result that Pareto-optimal bipartite matchings are fractionally Pareto-optimal, we prove that \textsc{Max-Pareto} is $\mathcal{NP}$-complete. We propose a bilinear programming formulation of \textsc{Max-Pareto}, and evaluate its computational performance on the problem of finding Pareto-optimal allocations of highest welfare. 
\vspace{5mm}
\newline
{\bf Keywords:} Pareto-optimality, Allocation problem, Bipartite matching, Linear programming, Complexity
\end{abstract}

\section{Introduction}
\label{sec:intro}

The concept of Pareto-optimality describes societal outcomes in which no person can be made better off without making at least one person worse off. This well-known efficiency measure is frequently studied in the context of allocation problems, where a set of objects must be assigned to agents. Important applications include house allocation \citep{abraham2004pareto}, school allocation \citep{abdulkadirouglu2003school}, and kidney exchange \citep{sonmez2014altruistically}. Since the solutions to allocation problems can be represented as matchings in (bipartite) graphs, it features a close connection with matching theory.

Next to Pareto-optimality, it is common to consider the social welfare of an allocation. This can represent a valuation of the object-agent assignments, reflecting, for example, compatibilities or transportation costs. The social welfare objective typically conflicts with that of Pareto-optimality: an allocation that maximises social welfare is not necessarily Pareto-optimal, and a Pareto-optimal allocation does not necessarily maximise social welfare. As such, it makes sense to maximise welfare among Pareto-optimal allocations. 

\citet{biro2021complexity} show that it is $\mc{NP}$-hard to determine a Pareto-optimal allocation, among all Pareto-optimal allocations, that maximises social welfare. This holds even under object-based weights and complete preferences. \citet{aziz2023computing} consider the problem of maximising welfare among `fair' allocations, and show that this problem is $\mc{NP}$-hard for several fairness measures, including envy-freeness and proportionality. Welfare maximisation has also proven to be $\mc{NP}$-hard when one considers stable fractional matchings \citep{caragiannis2019stable}. Omitting the welfare objective, computing a Pareto-optimal and almost proportional allocation can be done in strongly polynomial time \citep{aziz2020polynomial}.

In this work, we move beyond allocations and consider Pareto-optimal solutions to general linear programming problems. In particular, we assume that each feasible solution maps to a payoff vector, representing the payoffs that a finite number of agents derive from the solution. We are interested in maximising a linear function, e.g., some social welfare function, over the set of Pareto-optimal payoff vectors. We refer to this problem as \textsc{Max-Pareto}. The linear programming model provides a flexible framework to model a wide class of problems. As we will see, this framework can also be used to model discrete allocation problems.

Intuitively, one might suspect that relaxing integrality makes it easier to determine welfare-maximising Pareto-optimal solutions. Perhaps surprisingly, however, our main contribution is to show that the decision version of \textsc{Max-Pareto} remains $\mc{NP}$-complete. To obtain this result, we show that Pareto-optimal matchings on bipartite graphs always satisfy the stronger notion of fractional Pareto-optimality. Informally, this result enables us to model the allocation problems discussed above, including that of \citep{biro2021complexity}, as instances of \textsc{Max-Pareto}. The hardness result directly follows. Finally, we propose a bilinear programming formulation of \textsc{Max-Pareto}, and evaluate its computational performance on the problem of finding Pareto-optimal allocations of highest welfare. On some instances featuring a large number of objects, our generic method provides improved primal solutions compared to a problem-specific integer linear program proposed by \citet{biro2021complexity}.

The remainder of this paper is structured as follows. We formally describe \textsc{Max-Pareto} in Section~\ref{sec:problem}, and prove several structural properties in Section~\ref{sec:properties}. We study Pareto-optimal bipartite matchings in Section~\ref{sec:matchings}, showing that any Pareto-optimal matching also satisfies the stronger notion of fractional Pareto-optimality. Leveraging this result, we prove that \textsc{Max-Pareto} is $\mc{NP}$-complete in Section~\ref{sec:complexity}. Finally. we propose a bilinear programming formulation in Section~\ref{sec:formulation}, and evaluate its computational performance on the problem of finding Pareto-optimal allocations of maximum welfare in Section~\ref{sec:experiments}.

\section{Problem Description}
\label{sec:problem} 

Consider a nonempty, bounded polyhedron $\mc{X} := \{ \bm{x} \in \mathbb{R}^k : A \bm{x} \leq \bm{b} \}$, with $A \in \mathbb{R}^{m \times k}$ and $\bm{b} \in \mathbb{R}^m$. Every solution vector $\bm{x} \in \mc{X}$ maps linearly to an $n$-dimensional payoff vector $\bm{u} := U \bm{x}$, where $n$ is the number of agents under consideration and $U \in \mathbb{R}^{n \times k}$ is a linear mapping from solutions to payoff vectors. For each $i \in \{1, \dots, n\}$, entry $u_i$ equals the payoff that agent $i$ derives from the solution. Without loss of generality, one can replace the linear mapping $U$ by an affine transformation.

Denote by $\mc{U} := \{ U \bm{x} : \bm{x} \in \mc{X} \}$ the set of attainable payoff vectors. We say $\bm{u} \in \mc{U}$ Pareto-dominates $\bm{u'} \in \mc{U}$, denoted by $\bm{u} \succ \bm{u}'$, whenever $u_i \geq u_i'$ for all $i$ and $u_i > u_i'$ for some $i$. In other words, no agent is worse off and at least one agent is strictly better off. We say $\bm{u} \in \mc{U}$ is Pareto-optimal when it is not Pareto-dominated by any $\bm{u}' \in \mc{U}$. Denote the set of Pareto-optimal payoff vectors by $\mc{U}_P \subseteq \mc{U}$, and the corresponding set of Pareto-optimal solutions by $\mc{X}_P := \{ \bm{x} \in \mc{X} : U \bm{x} \in \mc{U}_P \}$. Note that a solution is Pareto-optimal when it generates a Pareto-optimal payoff vector. The goal is to maximise some linear objective function $\bm{c}^\top \bm{x}$ over the set of Pareto-optimal solutions $\mc{X}_P$:
\begin{subequations}
\begin{align}
\max \quad & \bm{c}^\top \bm{x} \\
\text{s.t.} \quad & \bm{x} \in \mc{X}_P. 
\end{align}
\label{eq:max_pareto}
\end{subequations}
We refer to problem~(\ref{eq:max_pareto}) as \textsc{Max-Pareto}.

\section{Structural Properties}
\label{sec:properties}

We now present some structural properties of \textsc{Max-Pareto}. We will use these results to prove $\mc{NP}$-completeness in Section~\ref{sec:complexity} and to develop a bilinear programming formulation in Section~\ref{sec:formulation}.

We start by observing that the set of attainable payoff vectors forms a polyhedron.
\begin{lem}
\label{lem:polyhedron}
The set of attainable payoff vectors $\mc{U}$ is a polyhedron.
\end{lem}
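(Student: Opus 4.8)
The plan is to exhibit $\mc{U}$ as the image of the polyhedron $\mc{X}$ under the linear map $\bm{x} \mapsto U\bm{x}$, and to invoke the standard fact that linear images of polyhedra are again polyhedra. I would realise this image as a projection, which makes the polyhedral structure transparent.

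First I would lift the problem to $\mathbb{R}^{k+n}$. Define the set
\[
P := \{ (\bm{x}, \bm{u}) \in \mathbb{R}^{k} \times \mathbb{R}^{n} : A\bm{x} \leq \bm{b}, \ U\bm{x} - \bm{u} \leq \bm{0}, \ -U\bm{x} + \bm{u} \leq \bm{0} \},
\]
which is an intersection of finitely many closed halfspaces and hence a polyhedron. By construction, a pair $(\bm{x}, \bm{u})$ lies in $P$ if and only if $\bm{x} \in \mc{X}$ and $\bm{u} = U\bm{x}$, so the points of $P$ are exactly the pairs $(\bm{x}, U\bm{x})$ for $\bm{x} \in \mc{X}$. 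Consequently, $\mc{U}$ is precisely the projection of $P$ onto the $\bm{u}$-coordinates. The decisive step is then to eliminate the variables $x_1, \dots, x_k$ one at a time via Fourier--Motzkin elimination; each elimination replaces the current finite system of inequalities by another finite system describing the projection, so after $k$ steps one obtains a finite system of linear inequalities in $\bm{u}$ alone. This certifies that $\mc{U}$ is a polyhedron.

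As an alternative route that exploits the stated boundedness of $\mc{X}$, I would instead appeal to the Minkowski--Weyl theorem: a bounded polyhedron is a polytope, so $\mc{X} = \operatorname{conv}\{\bm{v}_1, \dots, \bm{v}_p\}$ for its finitely many vertices. Linearity of $U$ then yields $\mc{U} = \operatorname{conv}\{U\bm{v}_1, \dots, U\bm{v}_p\}$, the convex hull of finitely many points, which is again a polytope and hence a polyhedron. This version has the pleasant by-product that $\mc{U}$ is itself bounded.

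There is no genuine obstacle here, since the statement is a classical fact; the only nontrivial ingredient is the polyhedral projection machinery (Fourier--Motzkin elimination) or, equivalently, the Minkowski--Weyl representation. The mild care required is simply to encode the affine constraint $\bm{u} = U\bm{x}$ as the two inequalities above so that $P$ is manifestly a polyhedron before projecting.
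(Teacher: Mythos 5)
Your main argument is exactly the paper's proof: lift to the auxiliary polyhedron $\{(\bm{x},\bm{u}) : A\bm{x} \leq \bm{b},\ \bm{u} = U\bm{x}\}$ and observe that $\mc{U}$ is its projection onto the $\bm{u}$-coordinates, hence a polyhedron; your Fourier--Motzkin discussion simply makes explicit the projection fact the paper invokes implicitly. The alternative Minkowski--Weyl route you sketch is also valid (and yields boundedness of $\mc{U}$ as a by-product), but it is an optional extra rather than a departure from the paper's approach.
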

\begin{proof}
Define the auxiliary polyhedron $\mc{Y} := \{ (\bm{x}, \bm{u}) : A \bm{x} \leq b, \bm{u} = U \bm{x} \}$. Since $\mc{U} = \{ \bm{u} : (\bm{x}, \bm{u}) \in \mc{Y}\}$ is a projection of $\mc{Y}$, the set $\mc{U}$ is also a polyhedron.
\end{proof}

It is easy to see that the set of non-dominated payoff vectors lies on the boundary of this polyhedron.
\begin{lem}
The set of Pareto-optimal payoff vectors $\mc{U}_p$ is fully contained in the boundary of $\mc{U}$.
\end{lem}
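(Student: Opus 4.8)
The plan is to proceed by contradiction, exploiting the fact that a point in the interior of the polyhedron $\mc{U}$ can be pushed simultaneously upward in every coordinate while remaining feasible. Recall from Lemma~\ref{lem:polyhedron} that $\mc{U}$ is a polyhedron, hence closed, so its boundary equals $\mc{U} \setminus \operatorname{int}(\mc{U})$; establishing the claim amounts to showing that no Pareto-optimal payoff vector lies in $\operatorname{int}(\mc{U})$.

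First I would dispose of the degenerate case: if $\mc{U}$ is not full-dimensional in $\real^n$, then $\operatorname{int}(\mc{U}) = \emptyset$, the boundary coincides with $\mc{U}$ itself, and there is nothing to prove. So I may assume there exists a Pareto-optimal vector $\bm{u}^\star \in \operatorname{int}(\mc{U})$ and aim for a contradiction.

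Next, by the definition of interior, there is some $\epsilon > 0$ such that the open Euclidean ball $B(\bm{u}^\star, \epsilon)$ is contained in $\mc{U}$. The key step is to perturb $\bm{u}^\star$ in the all-ones direction: set $\bm{u}' := \bm{u}^\star + \delta \bm{1}$, where $\bm{1}$ denotes the all-ones vector and $\delta > 0$ is chosen small enough that $\lVert \delta \bm{1} \rVert = \delta \sqrt{n} < \epsilon$, so that $\bm{u}' \in B(\bm{u}^\star, \epsilon) \subseteq \mc{U}$. Since $u_i' = u_i^\star + \delta > u_i^\star$ for every agent $i$, we have $\bm{u}' \succ \bm{u}^\star$, contradicting the Pareto-optimality of $\bm{u}^\star$.

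I do not expect a genuine obstacle here; the only point requiring care is the notion of boundary one adopts. Because the argument only ever moves strictly inside a full-dimensional ball, it is insensitive to whether one works with the topological boundary in $\real^n$ or the relative boundary, and the degenerate low-dimensional case is absorbed harmlessly by the first step. The essential idea — that strict domination in the all-ones direction is always available from an interior point — is what drives the result.
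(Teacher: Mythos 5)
Your proof is correct and follows essentially the same argument as the paper: assume a Pareto-optimal point lies in the interior, take an $\epsilon$-ball around it contained in $\mc{U}$, and shift by a small positive multiple of $\bm{1}$ to obtain a strictly dominating point, yielding a contradiction. Your extra care about the degenerate (non-full-dimensional) case and the explicit bound $\delta\sqrt{n} < \epsilon$ are fine refinements but do not change the substance of the argument.
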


\begin{proof}
By contradiction, assume there exists an $\bm{u} \in \mc{U}_p$ in the interior of $\mc{U}$. Then, there exists an $\epsilon > 0$ for which the ball of radius $\epsilon$ around $\bm{u}$ is fully contained in $\mc{U}$. In particular, there exists an $\epsilon' > 0$ for which $\bm{u}' := \bm{u} + \epsilon' \bm{1} \in \mc{U}$. Clearly, $\bm{u}' \succ \bm{u}$. This contradicts the fact that $\bm{u} \in \mc{U}_p$. 
\end{proof}

This result is rather intuitive, as Pareto-optimal payoffs must be `maximal' in some sense. In welfare economics, this is formalised through the classic result that Pareto-optimal allocations maximimise a weighted sum of agents' utilities \citep{negishi1960welfare}. We now prove a linear-programming counterpart:
\begin{lem}
\label{lem:support}
A solution $\bm{x} \in \mc{X}$ is Pareto-optimal if and only if there exists a weight vector $\bm{w} \in \mathbb{R}^{n}_{++}$ such that $U \bm{x} \in \arg \max\limits_{\bm{u} \in \mc{U}}  \bm{w}^\top \bm{u}$.
\end{lem}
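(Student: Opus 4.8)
The plan is to prove the two implications separately, treating sufficiency as a warm-up and concentrating the real effort on necessity. For sufficiency I would suppose some $\bm{w} \in \mathbb{R}^n_{++}$ satisfies $U\bm{x} \in \arg\max_{\bm{u}\in\mc{U}} \bm{w}^\top\bm{u}$, and argue by contradiction: if $\bm{x}$ were not Pareto-optimal, there would be $\bm{u}' \in \mc{U}$ with $\bm{u}' \succ U\bm{x}$, i.e.\ $u_i' \geq (U\bm{x})_i$ for all $i$ with at least one strict inequality. Because every component of $\bm{w}$ is strictly positive, summing $w_i$ times these inequalities yields $\bm{w}^\top\bm{u}' > \bm{w}^\top U\bm{x}$, contradicting optimality. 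This direction uses only positivity of $\bm{w}$ and no convexity.

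For necessity, let $\bm{u}^\ast := U\bm{x}$ be Pareto-optimal. The idea is to detect, through a linear program, whether $\bm{u}^\ast$ admits any improvement, and then read off the desired weight from its dual. I would introduce
\begin{equation*}
z^\ast := \max \left\{ \bm{1}^\top \bm{s} : U\bm{y} - \bm{s} = \bm{u}^\ast,\ \bm{s} \geq \bm{0},\ A\bm{y} \leq \bm{b} \right\},
\end{equation*}
with variables $\bm{y} \in \mathbb{R}^k$ and $\bm{s} \in \mathbb{R}^n$. This program is feasible, since $(\bm{y},\bm{s}) = (\bm{x}, \bm{0})$ is feasible, and bounded above, because the image $\mc{U} = \{U\bm{y} : \bm{y}\in\mc{X}\}$ of the bounded set $\mc{X}$ is bounded, so $\bm{s} = U\bm{y}-\bm{u}^\ast$ stays bounded. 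Moreover $z^\ast = 0$: any feasible point satisfies $U\bm{y} = \bm{u}^\ast + \bm{s}$ with $\bm{s}\geq\bm{0}$, so $\bm{s}\neq\bm{0}$ would give $U\bm{y} \succ \bm{u}^\ast$, contradicting Pareto-optimality; hence $\bm{s}=\bm{0}$ at every feasible point.

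Next I would invoke strong LP duality. Assigning a free multiplier $\bm{w}$ to the equality constraints and a multiplier $\bm{\lambda}\geq\bm{0}$ to $A\bm{y}\leq\bm{b}$, the dual reads $\min\{ \bm{w}^\top\bm{u}^\ast + \bm{\lambda}^\top\bm{b} : U^\top\bm{w} + A^\top\bm{\lambda} = \bm{0},\ \bm{w}\leq-\bm{1},\ \bm{\lambda}\geq\bm{0}\}$ and attains the same value $0$. Let $(\bm{w},\bm{\lambda})$ be an optimal dual solution and set $\bar{\bm{w}} := -\bm{w}$. The constraint $\bm{w}\leq-\bm{1}$ forces $\bar{\bm{w}} \geq \bm{1}$, so $\bar{\bm{w}}\in\mathbb{R}^n_{++}$ — this is exactly where strict positivity is manufactured. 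It then remains to check that $\bar{\bm{w}}$ supports $\bm{u}^\ast$: for any $\bm{u} = U\bm{y}\in\mc{U}$ with $A\bm{y}\leq\bm{b}$, dual feasibility $U^\top\bar{\bm{w}} = A^\top\bm{\lambda}$ together with $\bm{\lambda}\geq\bm{0}$ gives $\bar{\bm{w}}^\top\bm{u} = \bm{\lambda}^\top A\bm{y} \leq \bm{\lambda}^\top\bm{b} = \bar{\bm{w}}^\top\bm{u}^\ast$, the last equality being the zero duality gap. Hence $\bm{u}^\ast \in \arg\max_{\bm{u}\in\mc{U}}\bar{\bm{w}}^\top\bm{u}$, as required.

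The main obstacle is obtaining a strictly positive weight rather than a merely nonnegative one. A direct separating-hyperplane argument between $\mc{U}$ and the open set $\bm{u}^\ast + \mathbb{R}^n_{>0}$ only yields $\bm{w}\geq\bm{0}$ with $\bm{w}\neq\bm{0}$, which is too weak, since weights with zero entries can support points that are not Pareto-optimal. The role of the normalising objective $\bm{1}^\top\bm{s}$ in the program above is precisely to exclude this: it pushes every dual component uniformly below $-1$, and thereby delivers strict positivity of $\bar{\bm{w}}$ for free.
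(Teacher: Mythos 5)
Your proof is correct, but it takes a genuinely different route from the paper. The paper proves necessity geometrically: it forms the set $\mc{T}_{\bm{u}}$ of points weakly dominating $\bm{u}^\ast$, invokes a proper-separation theorem for polyhedra (Theorem~3 of McLennan) to get a hyperplane meeting $\mc{U}$ and $\mc{T}_{\bm{u}}$ exactly in prescribed faces, and then argues by hand (perturbing along $\bm{e}_i$) that the normal vector must be strictly positive. You instead set up the auxiliary ``improvement'' LP $\max\{\bm{1}^\top\bm{s} : U\bm{y}-\bm{s}=\bm{u}^\ast,\ \bm{s}\ge\bm{0},\ A\bm{y}\le\bm{b}\}$, observe that Pareto-optimality forces its value to be $0$, and read the supporting weights off an optimal dual solution; your dual derivation, the zero-gap identity $\bm{\lambda}^\top\bm{b}=\bar{\bm{w}}^\top\bm{u}^\ast$, and the chain $\bar{\bm{w}}^\top U\bm{y}=\bm{\lambda}^\top A\bm{y}\le\bm{\lambda}^\top\bm{b}$ all check out, and the boundedness of $\mc{X}$ assumed in Section~\ref{sec:problem} justifies strong duality. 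The key difficulty --- strict positivity rather than mere nonnegativity of the weights --- is handled in both proofs, but yours gets it ``for free'' from the dual constraint $\bm{w}\le-\bm{1}$ induced by the objective $\bm{1}^\top\bm{s}$, whereas the paper must rule out $w_i\le 0$ by a separate contradiction argument. Your approach (a classical one in multi-objective LP, in the spirit of Isermann/Benson efficiency tests) is more elementary and self-contained, requires no external separation theorem, and is constructive: the certificate $\bar{\bm{w}}$ is computable by solving one LP and automatically satisfies the normalisation $\bm{w}\ge\bm{1}$ that the paper later imposes in constraint~(\ref{eq:domain_w}) of its bilinear formulation. What the paper's geometric argument buys in exchange is finer structural information --- the supporting hyperplane touches $\mc{U}$ exactly in the minimal face containing $\bm{u}^\ast$ --- which is not needed for the lemma itself but reflects the polyhedral picture in Figure~\ref{fig:separation}.
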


\begin{proof}
First, let $\bm{w}$ satisfying the conditions be given. Clearly, $\bm{x}$ is Pareto-optimal: domination by some $\bm{x}'$ would imply that $\bm{w}^\top U \bm{x}' > \bm{w}^\top U \bm{x}$, contradicting optimality of $U \bm{x}$.

Conversely, let $\bm{x}$ be Pareto-optimal and let $\bm{u} = U \bm{x}$. As illustrated in Figure~\ref{fig:separation}, we construct a supporting vector $\bm{w}$ based on a hyperplane separating $\mc{U}$ from the set of vectors strictly dominating $\bm{u}$. Define $\mc{T}_{\bm{u}} := \{ \bm{t} \in \mathbb{R}^n : \bm{t} \succeq \bm{u} \} = \{ \bm{u} + \bm{r} : \bm{r} \in \mathbb{R}^{n}_{+}\}$ as the set of points that Pareto-dominate $\bm{u}$, and $\bm{u}$ itself. It is readily seen that $\mc{T}_{\bm{u}}$ is a polyhedron.

\def\nudge{.5}

\tikzset{axis/.style={thick, black, -latex, shorten <=-\nudge cm, shorten >=-2*\nudge cm}}
\tikzset{line/.style={thick, black}}

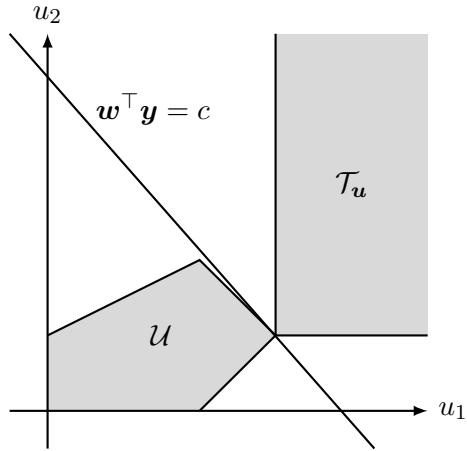
\begin{figure}[htb!]
\centering
\begin{tikzpicture}

\draw[axis] (0,0) -- (4,0) node[right=2* \nudge cm] {\(u_1\)};
\draw[axis] (0,0) -- (0,4) node[above=2*\nudge cm] {\(u_2\)};

\draw[line] (-0.5, 5) -- (4.3,-.5) coordinate (sep);
\node[right] at (0.5, 4) {$\bm{w}^\top \bm{y} = c$};

\begin{scope}
\clip (-\nudge ,-\nudge) rectangle (4+\nudge,4+\nudge);

\draw[thin, line] (0,1) -- (2,2) -- (3,1) -- (2,0);
\clip (0,1) -- (2,2) -- (3,1) -- (2,0) -| (0,1);
\fill[black, opacity=.15] (0,0) rectangle (4,4);
\node at (1 + \nudge, 1) {$\mc{U}$};
\end{scope}

\draw[thin, line] (3,5) -- (3,1) -- (5,1);
\clip (3, 1) -- (3, 5) -- (5, 5) -- (5, 1) -| (3, 1);
\fill[black, opacity=.15] (3, 1) rectangle (5, 5);
\node at (4, 3) {$\mc{T}_{\bm{u}}$};
\end{tikzpicture}
\caption{Illustration of Lemma~\ref{lem:support}.}
\label{fig:separation}
\end{figure}

By Pareto-optimality of $\bm{u}$, it holds that $\mc{U} \cap \mc{T}_{\bm{u}} = \{ \bm{u} \}$. Since $\mc{U}$ is a polyhedron by Lemma~\ref{lem:polyhedron}, there exists a smallest face $F$ of $\mc{U}$ containing $\bm{u}$. Let $G := \{ \bm{t} \in \mathbb{R}^n : \bm{1}^\top \bm{t} = \bm{1}^\top \bm{u} \} \cap \mc{T}_{\bm{u}} = \{\bm{u} \}$ be a smallest face of $\mc{T}_{\bm{u}}$ containing $\bm{u}$. Clearly, $\text{aff}(F \cup G) = \text{aff}(F \cup \{ \bm{u}\}) = \text{aff}(F) \neq \mathbb{R}^n$. By Theorem 3 of \citep{mclennan2002ordinal}, there exists a hyperplane $H := \{ \bm{y} \in \mathbb{R}^n : \bm{w}^\top \bm{y} = c \}$ such that $\mc{U} \subset H^- :=  \{ \bm{y} \in \mathbb{R}^n : \bm{w}^\top \bm{y} \leq c \}$, $\mc{T}_{\bm{u}} \subset H^+ := \{ \bm{y} \in \mathbb{R}^n : \bm{w}^\top \bm{y} \geq c \}$, $\mc{U} \cap H = F$, and $\mc{T}_{\bm{u}} \cap H = G$.

We show that $\bm{w}^\top \in \mathbb{R}^n_{++}$. To the contrary, assume there exists an index $i$ for which $w_i \leq 0$. Consider $\bm{t} := \bm{u} + \bm{e}_i \in \mc{T}_{\bm{u}}$, the payoff vector that Pareto-dominates $\bm{u}$ by one unit on entry $i$. Then $\bm{t}\in H^+$. Since $w_i \leq 0$, it holds that $\bm{w}^\top \bm{t} = \bm{w}^\top (\bm{u} + \bm{e}_i) = c + w_i \leq c$. Thus, $\bm{t}\in H^-$. At the same time, it holds that $\bm{t} \notin G$ and therefore $\bm{t} \notin H$. 
This yields a contradiction, and hence $\bm{w} \in \mathbb{R}^n_{++}$. Since $\mc{U} \subset H^-$ and $U\bm{x}=\bm{u} \in H$, it follows that $U \bm{x} \in \arg \max\limits_{\bm{u} \in \mc{U}} \bm{w}^\top \bm{u}$.
\end{proof}
In the following, we will refer to vectors $\bm{w}$ satisfying the conditions of Lemma~\ref{lem:support} as supporting weight vectors. We will use Lemma~\ref{lem:support} to formulate \textsc{Max-Pareto} as a bilinear program in Section~\ref{sec:formulation}. Moreover, we can use it to prove the existence of optimal vertex solutions to \textsc{Max-Pareto}.

\begin{lem}
\label{lem:opt_vertex}
There always exists an optimal solution to \textsc{Max-Pareto} at a vertex of $\mc{X}$. 
\end{lem}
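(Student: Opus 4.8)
The plan is to combine Lemma~\ref{lem:support} with the standard fact that a linear objective over a nonempty bounded polyhedron attains its maximum at a vertex. First I would use Lemma~\ref{lem:support} to rewrite the feasible region of \textsc{Max-Pareto}: a solution $\bm{x} \in \mc{X}$ is Pareto-optimal precisely when it maximises $(U^\top \bm{w})^\top \bm{x}$ over $\mc{X}$ for some strictly positive weight vector $\bm{w} \in \mathbb{R}^n_{++}$. Hence $\mc{X}_P = \bigcup_{\bm{w} \in \mathbb{R}^n_{++}} F_{\bm{w}}$, where $F_{\bm{w}} := \arg\max\limits_{\bm{x} \in \mc{X}} (U^\top \bm{w})^\top \bm{x}$ is the optimal face of the linear program with cost vector $U^\top \bm{w}$.

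The key structural observation is that each $F_{\bm{w}}$ is a face of the polyhedron $\mc{X}$, and that all of its points share the \emph{same} supporting weight $\bm{w}$, so by Lemma~\ref{lem:support} the entire face $F_{\bm{w}}$ is contained in $\mc{X}_P$. Since a nonempty bounded polyhedron has only finitely many faces, the family $\{ F_{\bm{w}} : \bm{w} \in \mathbb{R}^n_{++} \}$ consists of finitely many distinct faces $F_1, \dots, F_p$, each a nonempty bounded polyhedron contained in $\mc{X}_P$. In particular $\mc{X}_P$ is a finite union of faces of $\mc{X}$, which additionally yields the (otherwise nontrivial) fact that the supremum defining \textsc{Max-Pareto} is actually attained.

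I would then maximise $\bm{c}^\top \bm{x}$ over each face $F_j$ separately. Because $F_j$ is a nonempty bounded polyhedron, the maximum of the linear function $\bm{c}^\top \bm{x}$ over $F_j$ is attained at a vertex $\bm{v}_j$ of $F_j$; and since every vertex of a face of $\mc{X}$ is itself a vertex of $\mc{X}$, each $\bm{v}_j$ is a vertex of $\mc{X}$ lying in $\mc{X}_P$. Taking the best among $\bm{v}_1, \dots, \bm{v}_p$ produces a vertex of $\mc{X}$ whose objective value equals $\max_j \max\limits_{\bm{x} \in F_j} \bm{c}^\top \bm{x} = \max\limits_{\bm{x} \in \mc{X}_P} \bm{c}^\top \bm{x}$, hence an optimal solution to \textsc{Max-Pareto} located at a vertex of $\mc{X}$.

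The step requiring the most care is the decomposition of $\mc{X}_P$ into a finite union of faces: this rests on reading Lemma~\ref{lem:support} as attaching a single positive weight to an entire optimal face rather than to an isolated point, together with the finiteness of the face lattice of a bounded polyhedron. Once this is in place the reduction to vertices is routine linear programming, and I would only need to confirm that $\mc{X}$ being nonempty and bounded guarantees the existence of vertices and the attainment of each of the relevant maxima.
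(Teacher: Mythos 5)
Your proof is correct, but it takes a genuinely different route from the paper's. The paper argues pointwise: it takes an optimal solution $\bm{y}$ (asserting its existence from nonemptiness and boundedness of $\mc{X}$), writes $\bm{y}$ as a convex combination of vertices of $\mc{X}$, and uses the supporting weight $\bm{w}$ of $\bm{y}$ from Lemma~\ref{lem:support} to show that every vertex $\bm{x}$ appearing with positive coefficient must satisfy $\bm{w}^\top U \bm{x} = c$ (an averaging argument), hence is itself Pareto-optimal by Lemma~\ref{lem:support}, and by the same averaging argument applied to the objective must satisfy $\bm{c}^\top \bm{x} = z$; so any vertex in the support of the convex combination is an optimal vertex solution. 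You instead decompose the whole Pareto set: $\mc{X}_P = \bigcup_{\bm{w}} F_{\bm{w}}$ is a finite union of optimal faces, and you then solve an ordinary LP over each face, using the fact that a vertex of a face of $\mc{X}$ is a vertex of $\mc{X}$. Both arguments rest on the same reading of Lemma~\ref{lem:support} --- a supporting weight supports an entire face, not just one point --- but your global version buys something the paper's does not: it proves that the supremum defining \textsc{Max-Pareto} is actually attained, since a finite union of nonempty compact polytopes is nonempty and compact. The paper's opening assertion that an optimal solution exists ``since $\mc{X}$ is nonempty and bounded'' is, strictly speaking, incomplete: attainment of the maximum over $\mc{X}_P$ requires knowing that $\mc{X}_P$ is nonempty and closed, which is exactly what your face decomposition supplies. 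Conversely, the paper's argument is shorter and more elementary, needing only the representation of a point of a polytope as a convex combination of vertices rather than the face lattice.
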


\begin{proof}
Let $\bm{y}$ be an optimal solution with objective value $z$. Since $\mathcal{X}$ is nonempty and bounded, such a solution always exists. Denote the set of vertices of $\mc{X}$ by $\text{vert}(\mc{X})$. If $\bm{y} \in \text{vert}(\mc{X})$, we are done. Hence, assume the optimal solution is a convex combination of vertices: $\bm{y} = \sum\limits_{\bm{x} \in \text{vert}(\mc{X})} \alpha_{\bm{x}} \bm{x}$ for $\alpha_{\bm{x}} \geq 0, \sum\limits_{\bm{x} \in \text{vert}(\mc{X})} \alpha_{\bm{x}} = 1$. As $\bm{y}$ is Pareto-optimal, by Lemma~\ref{lem:support} there exist a weight vector $\bm{w}\in\mathbb{R}^n_{++}$ and scalar $c$ such that $\bm{w}^\top U \bm{y} = c$ and $\bm{w}^\top U \bm{x} \leq c$ for all $\bm{x} \in \mc{X}$. Consider any vertex $\bm{x}$ for which $\alpha_{\bm{x}} > 0$. Since $\bm{w}^\top U \bm{x} \leq c$ and $\bm{w}^\top U\bm{y} = \sum\limits_{\bm{x} \in \text{vert}(\mc{X})} \alpha_{\bm{x}} \bm{w}^\top U\bm{x}$, it must be that $\bm{w}^\top U \bm{x} = c$. It follows from Lemma~\ref{lem:support} that $\bm{x}$ is Pareto-optimal. 

By optimality of $\bm{y}$, it also holds that $\bm{c}^\top \bm{x} \leq z$. If not, $\bm{x}$ would be a Pareto-optimal solution with strictly higher objective value, contradicting optimality of $\bm{y}$. By the same reasoning as before, it must hold that $\bm{c}^\top \bm{x} = z$. Hence, any $\bm{x}$ for which $\alpha_{\bm{x}} > 0$ is an optimal vertex solution. As $\sum\limits_{\bm{x} \in \text{vert}(\mc{X})} \alpha_{\bm{x}} = 1$, at least one such vertex exists.
\end{proof}

To conclude this section, we identify an interesting case where zero welfare loss is incurred when imposing Pareto-optimality. \citet{biro2021complexity} prove a similar statement in the context of Pareto-optimal allocations and refer to this situation as `aligned interests'. 
\begin{prop}
\label{prop:zero_loss}
Whenever $\bm{c} = U^\top \bm{w}$ for some $\bm{w} \in \mathbb{R}^{n}_{++}$, any optimal solution to $\max\limits_{\bm{x} \in \mc{X}} \{ \bm{c}^\top \bm{x}\}$ is an optimal solution to \textsc{Max-Pareto}.
\end{prop}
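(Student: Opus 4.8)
My plan is to reduce the statement to a direct application of Lemma~\ref{lem:support}, exploiting the identity that the substitution $\bm{c} = U^\top \bm{w}$ turns the linear objective $\bm{c}^\top \bm{x}$ into a weighted payoff sum. Concretely, for every $\bm{x} \in \mc{X}$ we have
\[
\bm{c}^\top \bm{x} = (U^\top \bm{w})^\top \bm{x} = \bm{w}^\top U \bm{x},
\]
so maximising $\bm{c}^\top \bm{x}$ over $\mc{X}$ is exactly the same optimisation problem as maximising $\bm{w}^\top \bm{u}$ over the payoff set $\mc{U} = \{ U\bm{x} : \bm{x} \in \mc{X}\}$.

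Next I would take an arbitrary optimal solution $\bm{x}^\star$ of $\max_{\bm{x} \in \mc{X}} \{ \bm{c}^\top \bm{x}\}$; such a solution exists since $\mc{X}$ is nonempty and bounded. By the identity above, $\bm{x}^\star$ maximises $\bm{w}^\top U \bm{x}$ over $\mc{X}$, which is equivalent to $U \bm{x}^\star \in \arg\max_{\bm{u} \in \mc{U}} \bm{w}^\top \bm{u}$. Because the hypothesis guarantees $\bm{w} \in \mathbb{R}^n_{++}$, this is precisely the condition appearing in Lemma~\ref{lem:support}, and that lemma lets me conclude immediately that $\bm{x}^\star$ is Pareto-optimal, i.e.\ $\bm{x}^\star \in \mc{X}_P$. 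I would stress that the strict positivity of $\bm{w}$ is the essential ingredient here: without it Lemma~\ref{lem:support} would not apply, and $\bm{x}^\star$ could fail to be Pareto-optimal.

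Finally I would argue optimality for \textsc{Max-Pareto}. Since $\bm{x}^\star \in \mc{X}_P$, it is feasible for \textsc{Max-Pareto}. Moreover, $\mc{X}_P \subseteq \mc{X}$, so the maximum of $\bm{c}^\top \bm{x}$ over the smaller feasible region $\mc{X}_P$ cannot exceed its maximum over $\mc{X}$, which $\bm{x}^\star$ already attains. Hence $\bm{x}^\star$ attains the optimum over $\mc{X}_P$ as well, and is therefore an optimal solution to \textsc{Max-Pareto}. The argument involves essentially no obstacle beyond correctly recognising that the assumption $\bm{c} = U^\top \bm{w}$ with $\bm{w}$ strictly positive makes the supporting weight vector of Lemma~\ref{lem:support} coincide with the weight vector defining the objective; the only point requiring care is to invoke the set inclusion $\mc{X}_P \subseteq \mc{X}$ when transferring unconstrained optimality to optimality over the restricted region.
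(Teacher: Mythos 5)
Your proof is correct and takes essentially the same route as the paper: rewriting $\bm{c}^\top \bm{x} = \bm{w}^\top U \bm{x}$ and invoking Lemma~\ref{lem:support} to conclude that any maximiser of $\bm{c}^\top\bm{x}$ over $\mc{X}$ is Pareto-optimal. You additionally make explicit the final optimality-transfer step via the inclusion $\mc{X}_P \subseteq \mc{X}$, which the paper leaves implicit; this is a welcome (if minor) completion rather than a different approach.
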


\begin{proof}
Let $\bm{c} = U^\top \bm{w}$ for some $\bm{w} \in \mathbb{R}_{++}^{n}$. It holds that 
\begin{displaymath}
\begin{aligned}
\bm{x} \in \arg \max\limits_{\bm{x} \in \mc{X}} \{ \bm{c}^\top \bm{x}\} & \Rightarrow \bm{x} \in \arg \max\limits_{\bm{x} \in \mc{X}} \{(U^\top \bm{w})^\top \bm{x}\} \\
& \Rightarrow \bm{x} \in \arg \max\limits_{\bm{x} \in \mc{X}} \{ \bm{w}^\top U \bm{x}\} \\
& \Rightarrow U \bm{x} \in \arg \max\limits_{\bm{u} \in \mc{U}} \{ \bm{w}^\top \bm{u}\}.
\end{aligned}
\end{displaymath}
This shows that $\bm{x}$ is supported by $\bm{w}$. It follows from Lemma~\ref{lem:support} that $\bm{x}$ is Pareto-optimal.
\end{proof}
A simple example is when $\bm{c}$ is a strictly positive vector and all agents have uniform preferences over the set of feasible solutions, i.e., every row in $U$ contains $n$ identical entries. All instances satisfying the conditions of Proposition~\ref{prop:zero_loss} can be solved in polynomial time using standard linear-programming algorithms.

\section{Pareto-Optimal Bipartite Matchings}
\label{sec:matchings}

In this section, we prove the somewhat surprising result that Pareto-optimal matchings in bipartite graphs always satisfy the stronger notion of fractional Pareto-optimality. This result allows us to model discrete allocation problems as special cases of \textsc{Max-Pareto} and thereby prove $\mc{NP}$-completeness of \textsc{Max-Pareto} in Section~\ref{sec:complexity}. We introduce preliminary notation and definitions in Section~\ref{subsec:matchings_preliminary}. We prove several lemmas on Pareto-optimal matchings in Section~\ref{subsec:matchings_po}, and turn our attention to fractionally Pareto-optimal matchings in Section~\ref{subsec:matchings_fpo}.

\subsection{Preliminaries}
\label{subsec:matchings_preliminary}

Throughout the following, we use $G =  (V_1 \cup V_2, E, \bm{w})$ to denote a weighted bipartite graph from $V_1$ to $V_2$ with edge set $E$ and edge weights $\bm{w} \in \mathbb{R}^{|E|}$. For each subset $R \subseteq V_1 \cup V_2$, the set of vertices incident to a vertex in $R$ is denoted by $N(R)$. For notational convenience, we will use $N(i)$ to indicate $N(\{i\})$. A matching $M \subseteq E$ is a nonempty subset of edges with no common vertices. We use $V_1(M)$ to indicate the nodes in $V_1$ that are matched in $M$. A matching is perfect whenever $V_1(M) = V_1$. For the sake of completeness, we recall Hall's marriage theorem below. This well-known result provides a necessary and sufficient condition for the existence of a perfect matching in bipartite graphs. 
\begin{thm}[Marriage theorem \citep{hall1935representatives}] 
Bipartite graph $G$ admits a perfect matching if and only if $|R| \leq |N(R)|$ for all $R \subseteq V_1$.
\end{thm}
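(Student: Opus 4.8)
The plan is to prove the two implications separately, treating the forward (necessity) direction as immediate and devoting the real effort to sufficiency. For necessity, suppose $G$ admits a perfect matching $M$. Then every $i \in V_1$ is matched to a distinct partner in $V_2$, so for any $R \subseteq V_1$ these $|R|$ partners are pairwise distinct and all lie in $N(R)$; hence $|N(R)| \geq |R|$. Nothing more is needed here.

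For sufficiency, I would assume Hall's condition $|R| \leq |N(R)|$ for all $R \subseteq V_1$ and induct on $|V_1|$. The base case $|V_1| = 1$ is trivial: the single vertex has $|N(i)| \geq 1$, so it has a neighbour to match. For the inductive step I would split into two cases according to whether some nonempty proper subset of $V_1$ is \emph{tight}. In the surplus case, suppose every nonempty $R \subsetneq V_1$ satisfies the strict inequality $|N(R)| \geq |R| + 1$. I would pick any edge $\{u,v\}$ with $u \in V_1$, delete both $u$ and $v$, and observe that the reduced graph still satisfies Hall's condition: deleting $v$ lowers each $|N(R)|$ by at most one, which the surplus absorbs. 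The induction hypothesis then yields a perfect matching of the reduced graph, to which I append $\{u,v\}$.

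The remaining case is when some nonempty $R \subsetneq V_1$ is tight, i.e.\ $|N(R)| = |R|$. Here I would split $G$ into two subgraphs and match each separately. First, restrict to the subgraph on $R \cup N(R)$; since $N(S) \subseteq N(R)$ for every $S \subseteq R$, Hall's condition is inherited, and induction gives a matching $M_1$ saturating $R$. Second, consider the subgraph on $(V_1 \setminus R) \cup (V_2 \setminus N(R))$ and verify Hall's condition there: for any $S \subseteq V_1 \setminus R$, applying the original condition to $R \cup S$ gives $|N(R)| + |N(S) \setminus N(R)| = |N(R \cup S)| \geq |R| + |S|$, which together with $|N(R)| = |R|$ forces $|N(S) \setminus N(R)| \geq |S|$. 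Induction then provides a matching $M_2$ saturating $V_1 \setminus R$, and $M_1 \cup M_2$ is the desired perfect matching.

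I expect the tight-set case to be the main obstacle. The delicate points are confirming that Hall's condition genuinely transfers to both the tight block $R \cup N(R)$ and its complement, and that the two partial matchings are vertex-disjoint, which holds because they are built on disjoint vertex sets by construction. Everything else reduces to straightforward bookkeeping of neighbourhoods and cardinalities.
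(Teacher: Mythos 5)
Your proof is correct, but note that the paper itself does not prove this statement at all: Hall's marriage theorem is recalled there purely as a known result, with a citation to Hall (1935), so there is no proof in the paper to compare against. What you have written is a complete, self-contained version of the classical Halmos--Vaughan induction: necessity via the injection $R \to N(R)$ induced by the matching, and sufficiency by strong induction on $|V_1|$, splitting into the surplus case (every nonempty proper $R \subsetneq V_1$ has $|N(R)| \geq |R|+1$, so one may match an arbitrary edge and delete its endpoints) and the tight case ($|N(R)| = |R|$ for some nonempty proper $R$, where the identity $|N(R \cup S)| = |N(R)| + |N(S) \setminus N(R)|$ transfers Hall's condition to both blocks of the decomposition). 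All the steps check out: the surplus absorbs the loss of at most one neighbour when $v$ is deleted, both restricted subgraphs inherit Hall's condition exactly as you argue, and the two partial matchings are vertex-disjoint since they live on disjoint vertex sets. One small point worth making explicit is that your argument produces a matching saturating $V_1$ (not necessarily $V_2$), which is precisely what the paper means by a perfect matching, since it defines $M$ to be perfect whenever $V_1(M) = V_1$; so your proof establishes the statement in exactly the form the paper uses it.
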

To simplify notation, we use $M(i)$ to denote the vertex in $V_2$ matched to $i \in M(V_1)$ in matching $M$. Similarly, for each subset $R \subseteq M(V_1)$, we define $M(R) = \{M(i): i \in R\}$. Finally, for any subset $R\subseteq V_1$, we define the subgraph $G_R$ that is induced by $V_1\setminus{R}$ and $V_2\setminus M(R \cap V_1(M))$. In words, $G_R$ is the graph that contains the vertices in $V_1\setminus R$, the vertices in $V_2$ that are not equal to $M(i)$ for some $i\in R$, and all edges with both endpoints in these sets. The weight of these edges in $G_R$ is equal to their weight in $G$. Stated differently, the graph $G_R$ is obtained by removing all vertices in $R$ from $V_1$, all vertices matched to a vertex in $R$ from $V_2$, and all edges incident to the removed vertices. This definition is illustrated in Figure~\ref{fig:subgraph}. 

\tikzset{->-/.style={decoration={
			markings,
			mark=at position #1 with {\arrow{>}}},postaction={decorate}}}

\tikzset{mycircle/.style={circle, draw, fill=white, minimum size=7mm, inner sep=0pt}}

\begin{figure}[hbtp]
\centering
\begin{subfigure}[b]{0.45\textwidth}
\centering
\begin{tikzpicture}
    \node[mycircle] at (0, 5) (v1) {$r_1$};
    \node[mycircle] at (0, 4) (v4) {$r_2$};
    \node[mycircle] at (0, 3) (v5) {$v_1$};
    \node[mycircle] at (0, 2) (v6) {$v_2$};
    \node[mycircle] at (0, 1) (v7) {$v_3$};
    
    \node[mycircle] at (3, 4) (v9) {$v_4$};
    \node[mycircle] at (3, 3) (v10) {$v_5$};
    \node[mycircle] at (3, 2) (v11) {$v_6$};
    \node[mycircle] at (3, 1) (v12) {$v_7$};
    \node[mycircle] at (3, 0) (v13) {$v_8$};

    \draw[black] (v4) -- (v9);
    \draw[dashed, black] (v4) -- (v12);
   
    \draw[dashed, black] (v5) -- (v9);
    \draw (v5) -- (v10);
    \draw[dashed,] (v5) -- (v12);
    \draw[dashed ] (v5) -- (v13);
    \draw (v6) -- (v11);
    \draw[dashed] (v6) -- (v12);
    \draw[dashed] (v7) -- (v11);
\end{tikzpicture}
\caption{$G$}
\label{subfig:graph}
\end{subfigure}
\begin{subfigure}[b]{0.45\textwidth}
\centering
\begin{tikzpicture}
    \node[draw=black, fill=white, shape=circle] at (0, 3) (v5) {$v_1$};
    \node[draw=black, fill=white, shape=circle] at (0, 2) (v6) {$v_2$};
    \node[draw=black, fill=white, shape=circle] at (0, 1) (v7) {$v_3$};
    
    \node[draw=black, fill=white, shape=circle] at (3, 3) (v10) {$v_6$};
    \node[draw=black, fill=white, shape=circle] at (3, 2) (v11) {$v_7$};
    \node[draw=black, fill=white, shape=circle] at (3, 1) (v12) {$v_8$};
    \node[draw=black, fill=white, shape=circle] at (3, 0) (v13) {$v_9$};
   
    \draw (v5) -- (v10);
    \draw[dashed,] (v5) -- (v12);
    \draw[dashed ] (v5) -- (v13);
    \draw (v6) -- (v11);
    \draw[dashed] (v6) -- (v12);
    \draw[dashed] (v7) -- (v11);
\end{tikzpicture}
\caption{$G_{R}$}
\label{subfig:reduced_graph}
\end{subfigure}
\caption{Figure~(a) depicts graph $G=(V_1\cup V_2,E)$, with a matching $M$ given by solid lines and other edges of $G$ by dashed lines. Figure~(b) depicts graph $G_R$ for the subset $R=\{r_1,r_2\}\subseteq V_1$.}
\label{fig:subgraph}
\end{figure}
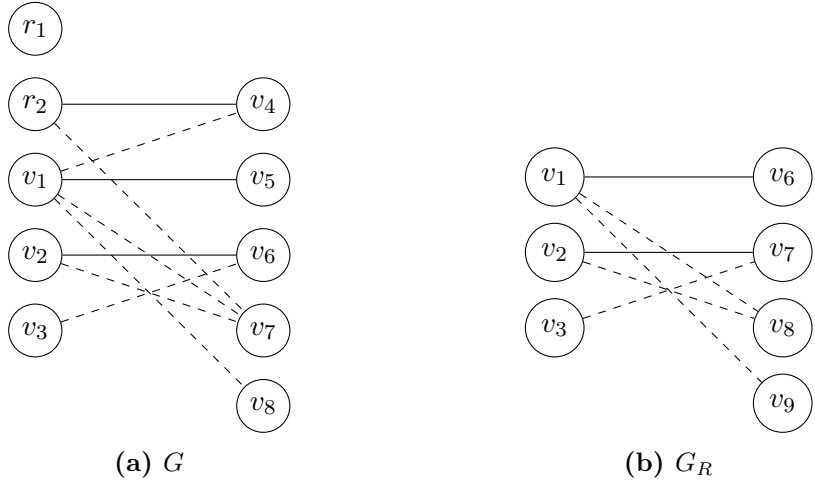

\subsection{Pareto-Optimal Matchings}
\label{subsec:matchings_po}

We now study Pareto-optimal matchings on weighted bipartite graphs. To this end, assume that each matching $M$ maps to a payoff vector $\bm{u}^M \in \mathbb{R}^{|V_1|}$, whose entries are defined as $u_{i}^M = w_{i, M(i)}$ if $i \in V_1(M)$ and $u_{i}^M := 0$ otherwise. In other words, edge weight $w_{i,j}$ represents the payoff to vertex $i \in V_1$ when matched to vertex $j \in V_2$. A matching is Pareto-optimal when there exists no other matching in which at least one vertex receives a strictly higher payoff and no vertex receives a strictly lower payoff.
\begin{defin}
\label{def:po_matching}
Let $\mc{M}$ denote the set of matchings in $G$. We say $M \in \mc{M}$ is Pareto-optimal (PO) when there does not exist an $M' \in \mc{M}$ for which $\bm{u}^{M'} \succ \bm{u}^{M}$.
\end{defin}

The following simple lemma will prove useful.
\begin{lem}\label{lem:submatrix}
Let matching $M$ in $G$ be PO. For any nonempty $R \subset V_1(M)$, the matching $M' := \{ (i, M(i)) : i \in V_1(M)\setminus R\}$ in the reduced graph $G_R$ is PO. 
\end{lem}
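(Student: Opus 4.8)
The plan is to prove the statement by a lifting argument run in contrapositive form: I would show that any matching $N'$ in $G_R$ that Pareto-dominates $M'$ extends to a matching $N$ in $G$ that Pareto-dominates $M$, contradicting the assumed Pareto-optimality of $M$. So suppose, for contradiction, that $M'$ is not PO in $G_R$; then there exists a matching $N'$ in $G_R$ with $\bm{u}^{N'} \succ \bm{u}^{M'}$, where both payoff vectors are indexed by the left vertex set $V_1 \setminus R$ of $G_R$. Define the candidate matching in $G$ by $N := N' \cup \{(i, M(i)) : i \in R\}$; that is, reattach to $N'$ exactly the edges of $M$ incident to $R$.

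The first thing I would verify is that $N$ is a legitimate matching in $G$. By construction, $N'$ uses only left-vertices in $V_1 \setminus R$ and right-vertices in $V_2 \setminus M(R)$, since these are the vertex sets defining $G_R$. The reattached edges $\{(i, M(i)) : i \in R\}$ use left-vertices in $R$ and right-vertices in $M(R)$. As $V_1 \setminus R$ and $R$ are disjoint, and likewise $V_2 \setminus M(R)$ and $M(R)$, the two edge sets share no endpoints, so $N$ is vertex-disjoint and hence a matching in $G$.

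Next I would compare the payoff vectors $\bm{u}^N$ and $\bm{u}^M$ over the full index set $V_1$, splitting into the indices in $R$ and those in $V_1 \setminus R$. For $i \in R$, vertex $i$ is matched to $M(i)$ in both $M$ and $N$, so $u_i^N = w_{i, M(i)} = u_i^M$. For $i \in V_1 \setminus R$, the edge of $N$ incident to $i$ (if any) coincides with that of $N'$, so $u_i^N = u_i^{N'}$; likewise, since $M$ and $M'$ agree on $V_1(M) \setminus R$ and both assign payoff $0$ to every vertex of $V_1 \setminus V_1(M)$, we have $u_i^M = u_i^{M'}$. The assumed domination $\bm{u}^{N'} \succ \bm{u}^{M'}$ then gives $u_i^N = u_i^{N'} \geq u_i^{M'} = u_i^M$ for every $i \in V_1 \setminus R$, with strict inequality for at least one such $i$. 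Combined with equality on $R$, this yields $\bm{u}^N \succ \bm{u}^M$, contradicting Pareto-optimality of $M$.

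I do not expect a genuine obstacle here; the argument is essentially careful bookkeeping. The one point warranting attention is confirming $u_i^M = u_i^{M'}$ for those $i \in V_1 \setminus R$ that are unmatched in $M$: such vertices remain in $G_R$, are unmatched in $M'$ as well, and receive payoff $0$ in both graphs, so the payoff comparison transfers verbatim between $G_R$ and $G$. Implicit throughout is that $R \subsetneq V_1(M)$, so that $V_1(M) \setminus R \neq \emptyset$ and $M'$ is a nonempty matching.
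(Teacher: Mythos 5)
Your proof is correct and follows exactly the same route as the paper's: both argue by contradiction, lift a dominating matching in $G_R$ back to $G$ by re-adding the edges $\{(i, M(i)) : i \in R\}$ (i.e., $M \setminus M'$), and conclude that the lifted matching dominates $M$. Your version simply spells out the bookkeeping (vertex-disjointness of the two edge sets and the payoff comparison on $R$ versus $V_1 \setminus R$) that the paper leaves implicit.
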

\begin{proof}
Suppose, to the contrary, that matching $M'$ in $G_{R}$ is dominated by matching $M''$. Augment $M''$ to a matching in $G$ by adding all edges in $M \setminus M'$. This augmented matching dominates $M$, contradicting Pareto-optimality of $M$.
\end{proof}
The proof above is illustrated in Figure~\ref{fig:subgraph}. Assume $M$ is Pareto-optimal. The matching $M'$ is given by the solid lines in Figure~\ref{subfig:reduced_graph}. If this matching $M'$ in $G_R$ would be dominated by a matching $M''$ in $G_R$, then $M''$ can be augmented to find a matching in $G$ that dominates $M$. 

\begin{defin}
\label{def:blocking}
Let $M$ be a matching in $G$. We say that nonempty subset $B \subseteq V_1(M)$ is a blocking set with respect to $M$ when it satisfies the following conditions:
\begin{enumerate}[(i)]
    \item $w_{i,j} \leq w_{i, M(i)}$ for all $i \in B, j \in N(i)$, 
    \item $w_{i,j} < w_{i,M(i)}$ for all $i \in B, j \in N(i) \setminus M(B)$.
\end{enumerate}
\end{defin}
Informally, a blocking set is a set of vertices $B\subseteq V_1(M)$ that all (i) receive their highest possible payoff and (ii) are strictly worse off when matched to any node $j \notin M(B)$. As a consequence, if another matching $M'$ matches any node $i\in V_1\setminus B$ to a node $j$ in $M(B)$, then at least one node in $B$ will be strictly worse off in $M'$ than in $M$. We provide an example below.
\begin{ex}
\label{ex:blocking}
Consider the weighted bipartite graph in Figure~\ref{fig:bipartite}. The non-dominated matchings in this graph are $M_1 := \{ (v_2, v_5), (v_3, v_6)\}$, $M_2 := \{ (v_1, v_5), (v_2, v_4), (v_3, v_6)\}$, and $M_3 := \{ (v_1, v_6), (v_2, v_5), (v_3, v_4)\}$, with corresponding payoff vectors $\bm{u}^1 = (0, 2, 4)$, $\bm{u}^2 = (1, 1, 4)$, and $\bm{u}^3 = (2, 2, 2)$. Blocking sets with respect to matching $M_1$ are $\{ v_2 \}$, $\{v_3\}$, and $\{v_2, v_3\}$.

\tikzset{->-/.style={decoration={
			markings,
			mark=at position #1 with {\arrow{>}}},postaction={decorate}}}

\tikzset{mycircle/.style={circle, draw, fill=white, minimum size=7mm, inner sep=0pt}}

\begin{figure}[hbtp]
\centering
\begin{tikzpicture}
	
    \node[mycircle] at (0, 3) (v1) {$v_1$};
    \node[mycircle] at (0, 1.5) (v2) {$v_2$};
    \node[mycircle] at (0, 0) (v3) {$v_3$};

    \node[mycircle] at (3, 3) (v4) {$v_4$};
    \node[mycircle] at (3, 1.5) (v5) {$v_5$};
    \node[mycircle] at (3, 0) (v6) {$v_6$};

    \draw[dashed] (v1) -- node[pos=0.15, above] {\scriptsize{1}} (v5);
    \draw[dashed] (v1) -- node[pos=0.05, below] {\scriptsize{2}} (v6);
    \draw[dashed] (v2) -- node[pos=0.1, above] {\scriptsize{1}} (v4);
    \draw (v2) -- node[pos=0.1, below] {\scriptsize{2}} (v5);
    \draw[dashed] (v3) -- node[pos=0.1, above] {\scriptsize{2}} (v4);
    \draw (v3) -- node[pos=0.05, below] {\scriptsize{4}} (v6);

\end{tikzpicture}
\caption{Bipartite graph of Example~\ref{ex:blocking}. Edges included in matching $M_1$ are solid lines, the remaining edges are dashed lines.}
\label{fig:bipartite}
\end{figure}
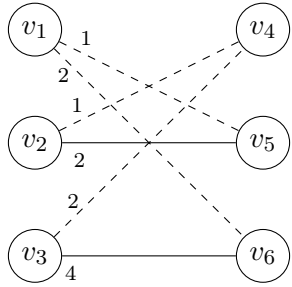
\end{ex}

It is easy to see that the union of blocking sets is again a blocking set. Moreover, a blocking set always exists for Pareto-optimal matchings in which at least one vertex is not assigned its highest possible payoff.
\begin{lem}
Let $M$ in $G$ be PO. If $w_{i,j} > u_i^M$ for some $i \in V_1$ and $j \in N(i)$, then there exists a blocking set with respect to $M$. \label{lem:blocking}
\end{lem}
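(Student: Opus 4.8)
The plan is to certify ``stuckness'' by a reachability-and-trading argument that invokes Pareto-optimality twice: once to forbid an augmenting chain that escapes to an unmatched vertex, and once to forbid an improving trading cycle. First I would reduce to the matched case. Fix $i_0 \in V_1$ and $j_0 \in N(i_0)$ with $w_{i_0,j_0} > u_{i_0}^M$. If $j_0$ were unmatched, rematching $i_0$ to $j_0$ (freeing its old $V_2$-partner, whose payoff is irrelevant) would strictly raise $u_{i_0}$ while lowering no payoff, contradicting Pareto-optimality. Hence $j_0 = M(i_1)$ for some $i_1 \in V_1(M)$.

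Next I would grow a reachable set $T \subseteq V_2$ by closure: initialise $T = \{j_0\}$, and whenever $M(i) \in T$, add every $j \in N(i)$ with $w_{i,j} \ge w_{i,M(i)}$ (the alternatives to which $i$ could move without loss). The key claim is that every vertex of $T$ is matched. Indeed, any vertex of $T$ is reached along an alternating chain $i_0, j_0 = M(i_1), j_1 = M(i_2), \dots$ in which each displaced $i_k$ relocates to a weakly-better $j_k$; taking a shortest such chain to a hypothetical unmatched endpoint $j_\ell$ and rematching $i_0 \to j_0, i_1 \to j_1, \dots, i_\ell \to j_\ell$ along it produces a matching in which $i_0$ is strictly better off, every other chain vertex is weakly better off, and all remaining vertices are unchanged, again contradicting Pareto-optimality. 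Choosing a shortest chain keeps the path simple, so the rematching really is a valid matching; verifying this validity is the first technical point to get right.

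Set $B_0 := M^{-1}(T)$, so that $M(B_0) = T$ and, by construction, \emph{every} weakly-better alternative of each $i \in B_0$ lies in $T$ and is matched. On $B_0$ I would define a digraph with an arc $i \to i'$ whenever $i \neq i'$ and $w_{i,M(i')} \ge w_{i,M(i)}$, and let $B$ be an inclusion-minimal nonempty subset of $B_0$ that is closed under out-arcs (equivalently, a sink strongly connected component); such a $B$ exists and is nonempty since $i_1 \in B_0$. Condition (ii) of Definition~\ref{def:blocking} is then immediate: any $j \in N(i) \setminus M(B)$ with $w_{i,j} \ge w_{i,M(i)}$ is matched (by the all-matched claim) and equals $M(i')$ with $i' \in B$ (by closedness), contradicting $j \notin M(B)$. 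For condition (i) I would argue by contradiction: a strictly-better alternative of some $i \in B$ sits at $M(i')$ with $i' \in B$, giving a \emph{strict} arc $i \to i'$; minimality makes $B$ strongly connected, so $i'$ reaches back to $i$, yielding a directed cycle that contains a strict arc. Cyclically reassigning partners along this cycle leaves everyone weakly as well off and makes one vertex strictly better off, contradicting Pareto-optimality. Hence every $i \in B$ is saturated, and $B$ is a blocking set.

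The crux, and the step I expect to be most delicate, is condition (i): excluding an improving trading cycle is exactly where Pareto-optimality does its real work, and it forces the extracted set to be simultaneously strongly connected and confined to the matched reachable region $T$. This is precisely why the preliminary closure and the all-matched claim are needed, since they prevent a saturated vertex possessing a tied \emph{unmatched} alternative from contaminating $B$ and breaking condition (ii).
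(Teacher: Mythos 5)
Your proof takes a genuinely different route from the paper's. The paper proceeds by induction on $|V_1|$: it builds an auxiliary graph containing only the weakly-improving edges plus $(i,j)$, invokes Hall's marriage theorem to extract a deficient set $R$, and then either exhibits $R\setminus\{i\}$ as a blocking set directly or recurses on the reduced graph $G_{\{i\}}$ via Lemma~\ref{lem:submatrix}. You instead argue non-inductively: you grow the reachable set $T$ of weakly-preferred alternatives, take $B$ to be a sink strongly connected component of the induced weak-preference digraph on $M^{-1}(T)$, and use Pareto-optimality twice --- once to forbid an alternating chain escaping to an unmatched vertex (which gives that $T$ is fully matched, hence condition (ii) of Definition~\ref{def:blocking} via closedness), and once to forbid an improving trading cycle (which gives condition (i), using that minimality of $B$ forces strong connectivity). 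This is a correct overall strategy and arguably more self-contained: it avoids Hall's theorem and induction, and it mirrors the classical augmenting-path/trading-cycle characterisation of Pareto-optimal assignments, whereas the paper's argument is shorter because Hall's theorem does the combinatorial work.

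One step needs patching, and it is exactly the one you flagged. In the all-matched claim, taking a shortest chain makes the $V_2$-vertices $j_0,\dots,j_\ell$ distinct, and hence the displaced vertices $i_1,\dots,i_\ell$ distinct, but it does \emph{not} prevent $i_0$ itself from reappearing as some $i_k$ with $k\ge 2$: this happens precisely when the chain passes through $M(i_0)$, and in that case your rematching assigns $i_0$ to both $j_0$ and $j_k$, so it is not a matching. The repair lies inside your own toolkit: if $M(i_0)\in T$, the chain from $j_0$ to $M(i_0)$ closes into a trading cycle ($i_0$ takes $j_0$, each intermediate $i_m$ takes $j_m$, and the last vertex $i_{k-1}$ takes $j_{k-1}=M(i_0)$), which strictly improves $i_0$ and weakly improves everyone else, contradicting Pareto-optimality. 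Hence $M(i_0)\notin T$, and only after this observation does shortest-chain simplicity yield a valid path rematching. With this case added, the argument goes through.
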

\begin{proof}
We use induction on $|V_1|$. First, we show that the statement holds for $|V_1| = 2$. In that case, $V_1 = \{i, i'\}$ for some node $i'\in V_1$ and $j\in V_2$. We know that $G$ contains edge $(i, j)$, for which it holds that $w_{i,j} > u_i^M$. Moreover, both the graph $G$ and $M$ must contain edge $(i', j)$ with weight $w_{i', j} > 0$. If not, the matching $M'=\{(i,j)\}$ dominates $M$. Suppose first that there are no other edges incident to $i'$. Then we immediately find that $\{ i' \}$ is a blocking set. Then suppose that $(i', l)$ is included in $G$ for some $l\in V_2\setminus\{j\}$. To show that $\{ i' \}$ still forms a blocking set, we must show that $w_{i', l} < w_{i', j}$. To the contrary, suppose that $w_{i', l} \geq w_{i', j}$. Then matching $M' := \{ (i, j), (i', l) \}$ would Pareto-dominate $M$. We conclude again that $\{ i' \}$ is a blocking set, and hence the statement holds for $|V_1| = 2$. 

Next, assume the statement holds for all $|V_1| \leq k$ and consider a graph $G$ with $|V_1| = k + 1$ satisfying the conditions. Then $M$ is a PO matching and $i\in V_1$, $j\in V_2$ are such that $w_{i,j} > u_i^M$. 

In order to prove the existence of a blocking set, consider the auxiliary graph $G'=(V_1' \cup V_2, E')$, where $V_1'=V_1(M)\cup\{i\}$ and 
$$E'=\{(k,l)\in E: k\in V_1'\setminus\{i\}, w_e \geq w_{k,M(k)}\}\cup\{(i,j)\}.$$
Observe that $(k,M(k))\in E'$ if $k\neq i$. By construction, any perfect matching in $G'$ dominates $M$, because the payoff for $i$ strictly increases to $w_{ij}$ and the payoff for all other vertices in $V_1(M)$ does not decrease. Thus, by Pareto-optimality of $M$, there cannot be a perfect matching in $G'$. By Hall's marriage theorem, there must exist a set $R\subseteq V_1'$ such that $|N(R)|<|R|$. We will use $R$ to construct a blocking set in $G$ with respect to $M$. Figure~\ref{fig:proof} depicts part of $G'$ in which $R$ and $N(R)$ are indicated.

Denote $R'=R\setminus\{i\}$. Observe first that $M(k)\in N(k)$ for all $k\in R'$ and that all $M(k)$ are different. If $i\notin R$, then $|N(R)|=|N(R')|\geq |M(R')|=|R'|=|R|$. We conclude that $i\in R$. Pareto-optimality of $M$ implies the existence of a node $i'\in V_1'$ such that $j=M(i')$. If $i'\notin R$, then $N(R)\supseteq M(R')\cup\{j\}$ and $j\notin M(R')$, so again $|N(R)|\geq |R|$. We conclude that $i'\in R$ as well. Now if there would be an $l\in V_2\setminus M(R)$ such that $w_{k,l} \geq w_{k,M(k)}$, then the edge $(k,l)\in E'$ and again $|N(R)|\geq |R|$. It follows that no such edge $(k,l)$ is in $E'$, and thus that all edges $(k,l)\in E$ with $k\in R$ and $l\in V_2\setminus M(R)$ satisfy $w_{k,l}<w_{k,M(k)}$. There are now two possibilities:
\begin{enumerate}[(i)]
        \item  It holds for all $k\in R'$ and $l\in M(R')$ that $w_{k,l}\leq w_{k,M(k)}$. In that case, $R'$ is a blocking set.
        \item There is an edge $(k,l)$ with $k\in R'$ and $l\in M(R')$ such that $w_{k,l} > w_{k,M(k)}$. Define the matching $M'$ as $M$ if $i$ is not matched in $M$, and as $M\setminus \{(i,M(i)\}$ otherwise. By Lemma~\ref{lem:submatrix}, the matching $M'$ is Pareto-optimal in $G_{\{i\}}$, and the edge $(k,l)$ satisfies $w_{k,l} > w_{k,(M(k)} = u_k^M$. By the induction hypothesis, there is a blocking set $B$ in $G_{\{i\}}$ with respect to $M'$. If $i$ is not matched in $M$, then $B$ is easily seen to also be a blocking set in $G$ with respect to $M$. Otherwise, recall that for all $k\in R'$ and $l\in V_2\setminus M(R')$, we have $w_{k,l}<w_{k,M(k)}$. This holds particularly for $l=M(i)$. This demonstrates that $B\cap R'$ is a blocking set in $G$ with respect to $M$.
\end{enumerate}
We conclude that a blocking set exists in both cases. The proof follows from induction.
\end{proof}

\tikzset{->-/.style={decoration={
			markings,
			mark=at position #1 with {\arrow{>}}},postaction={decorate}}}

\tikzset{mycircle/.style={circle, draw, fill=white, minimum size=7mm, inner sep=0pt}}

\begin{figure}[hbtp]
\centering
\begin{subfigure}[t]{0.45\textwidth}
\centering
\begin{tikzpicture}
    \node[mycircle] at (0, 7) (v1) {$i$};
    \node[mycircle] at (0, 6) (v2) {$i'$};
    \node[mycircle] at (0, 5) (v3) {$k$};
    \node at (0, 4) (v4) {$\vdots$};
    \node[mycircle] at (0, 3) (v5) {$k'$};
    
    \node[mycircle, densely dotted] at (3, 7) (v8) {};
    \node at (4, 7) (v120) {$M(i)$};
    \node[mycircle] at (3, 6) (v9) {$j$};
    \node[draw=black, fill=white, shape=circle] at (3, 5) (v10) {\phantom{$k$}};
    \node at (4, 5) (v120) {$M(k)$};
    \node[fill=white, shape=circle] at (3, 4) (v11) {$\vdots$};
    \node[mycircle] at (3, 3) (v12) {};
    \node at (4, 3) (v120) {$M(k')$};
    \node[mycircle] at (3, 8) (v13) {$l$};
    \draw[dotted] (v1) -- (v8);
    \draw[dashed] (v1) -- (v9);
    \draw (v2) -- (v9);
    \draw (v3) -- (v10);
    \draw (v5) -- (v12);

    \phantom{\node[draw,dotted,rounded corners,fit=(v9)(v120),inner sep=3mm] (N) {};}
\end{tikzpicture}
\caption{$G$}
\end{subfigure}
\begin{subfigure}[t]{0.45\textwidth}
\centering
\begin{tikzpicture}
    \node[draw=black, fill=white, shape=circle] at (8, 7) (v1) {$i$};
    \node[draw=black, fill=white, shape=circle] at (8, 6) (v2) {$i'$};
    \node[draw=black, fill=white, shape=circle] at (8, 5) (v3) {$k$};
    \node[fill=white, shape=circle] at (8, 4) (v4) {$\vdots$};
    \node[draw=black, fill=white, shape=circle] at (8, 3) (v5) {$k'$};
    
    \node[mycircle] at (11, 6) (v9) {$j$};
    \node[mycircle] at (11, 5) (v10) {};
    \node[] at (12, 5) (v120) {$M(k)$};
    \node[fill=white, shape=circle] at (11, 4) (v11) {$\vdots$};
    \node[mycircle] at (11, 3) (v12) {};
    \node[] at (12, 3) (v130) {$M(k')$};
    \node[mycircle] at (11, 8) (v13) {$l$};
    \draw (v1) -- (v9);
    \draw (v2) -- (v9);
    \draw (v2) -- (v12);
    \draw (v3) -- (v9);
    \draw (v3) -- (v10);
    \draw (v5) -- (v9);
    \draw (v5) -- (v10);
    \draw (v5) -- (v12);

    \node[draw,dotted,rounded corners,fit=(v1)(v5),inner sep=3mm] (R) {};
    \node[anchor=south east] at (R.north east) {$R$};
    \node[draw,dotted,rounded corners,fit=(v9)(v130),inner sep=3mm] (N) {};
    \node[anchor=south east] at (N.north east) {$N(R)$};
\end{tikzpicture}
\caption{$G'$}
\end{subfigure}
\caption{Figure~(a) depicts some vertices of the graph $G=(V_1\cup V_2,E)$, as well as a matching $M$ in $G$ with solid lines. Moreover, the edge $(i,j)\in E$ is depicted by a dashed line. The vertex $M(i)$ and the edge $(i,M(i))$, depicted by a dotted circle and line, are not necessarily included in the graph, depending on whether $i$ is matched or not. Figure~(b) depicts the resulting graph $G'$, indicating the sets $R$ and $N(R)$.}
\label{fig:proof}
\end{figure}
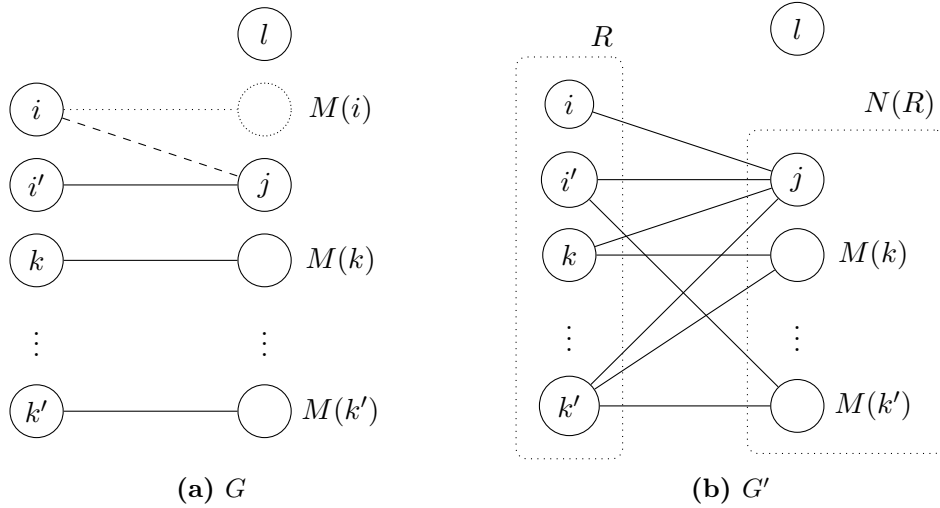

\subsection{Fractionally Pareto-Optimal Matchings}
\label{subsec:matchings_fpo}

We now turn our attention to the stronger notion of fractional Pareto-optimality. To illustrate the concept of fractional Pareto-optimality, consider three Pareto-optimal payoff vectors $(5,1)$, $(2,2)$, and $(1,5)$. It holds that $(2,2)$ is Pareto-dominated by the convex combination $\frac12(5,1)+\frac12(1,5)=(3,3)$. Similarly, a matching is fractionally Pareto-optimal when it is not Pareto-dominated by any convex combination of matchings.

\begin{defin}
\label{def:fpo_matching}
Let $\mc{M}$ denote the set of matchings of $G$. We say $M \in \mc{M}$ is fractionally Pareto-optimal (fPO) when there does not exist a vector $\bm{\alpha} \in \mathbb{R}^{|\mc{M}|}_{+}$ satisfying $\bm{1}^\top \bm{\alpha} = 1$ and $\sum\limits_{M' \in \mc{M}} \alpha_{M'} \bm{u}^{M'} \succ \bm{u}^{M}$.
\end{defin}
Clearly, every fPO matching is PO. Perhaps surprisingly, every PO matching is also fPO.
\begin{thm}
If matching $M$ in $G$ is PO, then $M$ is fPO. 
\label{thm:fpo_matching}
\end{thm}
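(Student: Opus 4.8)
The plan is to prove the statement by induction on $|V_1|$, running a proof by contradiction in the inductive step whose engine is the blocking set supplied by Lemma~\ref{lem:blocking}. The base case $|V_1| = 1$ is immediate: every matching assigns the single agent a payoff at most $u_i^M$ (otherwise $M$ would be dominated), so no convex combination of payoffs can strictly exceed $\bm{u}^M$, and $M$ is fPO. For the inductive step, suppose $M$ is PO but, contrary to the claim, not fPO. Then there is a convex combination $\bm{v} := \sum_{M' \in \mc{M}} \alpha_{M'} \bm{u}^{M'} \succ \bm{u}^M$. I would pick a coordinate $i^\star$ with $v_{i^\star} > u_{i^\star}^M$; since $v_{i^\star}$ is an average of the values $u_{i^\star}^{M'}$, some $M'$ in the support satisfies $w_{i^\star, M'(i^\star)} = u_{i^\star}^{M'} > u_{i^\star}^M$. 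Writing $j := M'(i^\star)$, the edge $(i^\star, j)$ witnesses $w_{i^\star, j} > u_{i^\star}^M$, so Lemma~\ref{lem:blocking} produces a blocking set $B \subseteq V_1(M)$.

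The crucial step is to show that $B$ is matched onto exactly $M(B)$ in every matching of the support, which decouples $B$ from the rest of the graph. By condition (i) of Definition~\ref{def:blocking}, each $i \in B$ already receives its maximum attainable payoff $u_i^M$, so $u_i^{M''} \le u_i^M$ for every matching $M''$, whence $v_i \le u_i^M$. Combined with $v_i \ge u_i^M$, this forces $v_i = u_i^M$, so the averaged values on $B$ attain the maximum; hence $u_i^{M'} = u_i^M$ for all $i \in B$ and every $M'$ in the support, and in particular $i^\star \notin B$. Because the weights are positive, $u_i^{M'} = u_i^M > 0$ means $i$ is matched in $M'$ to a vertex of weight $w_{i, M(i)}$, which by condition (ii) must lie in $M(B)$. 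As $M'$ is injective and $|B| = |M(B)|$, the matching $M'$ sends $B$ bijectively onto $M(B)$, so no vertex of $V_1 \setminus B$ is matched to $M(B)$ under $M'$.

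With this decoupling in hand, I would pass to the reduced graph $G_B$. If $V_1(M) \setminus B \neq \emptyset$, then $\hat{M} := \{(i, M(i)) : i \in V_1(M) \setminus B\}$ is a nonempty matching, and it is PO in $G_B$ by Lemma~\ref{lem:submatrix}. For each support matching $M'$, its restriction $\hat{M'}$ to $V_1 \setminus B$ uses only vertices of $V_2 \setminus M(B)$ and is thus a matching in $G_B$ whose payoffs agree with those of $M'$ on $V_1 \setminus B$ (an empty restriction contributes the zero payoff and is dropped, renormalising the remaining weights). Hence $\sum_{M'} \alpha_{M'} \bm{u}^{\hat{M'}} \succ \bm{u}^{\hat{M}}$ in $G_B$, with the strict gain at $i^\star \in V_1 \setminus B$, so $\hat{M}$ is not fPO in $G_B$. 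Since $|V_1(G_B)| < |V_1|$, this contradicts the induction hypothesis. In the remaining degenerate case $B = V_1(M)$, the vertex $i^\star$ is unmatched in $M$ and $j \in V_2 \setminus M(B)$ is likewise unmatched, so $M \cup \{(i^\star, j)\}$ is a matching that Pareto-dominates $M$, contradicting PO of $M$ directly.

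The main obstacle is the middle step: proving that any dominating fractional combination must pin the blocking set $B$ exactly onto $M(B)$. This is precisely what makes the induction go through, and it relies on combining both blocking conditions with the observation that a convex average equal to a coordinatewise maximum forces every term to attain that maximum. Some care is also needed with the degenerate case $B = V_1(M)$ and with empty restrictions, so that every object used in the reduced graph remains a legitimate nonempty matching.
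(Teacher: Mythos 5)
Your proof is correct and takes essentially the same route as the paper's: both extract a blocking set via Lemma~\ref{lem:blocking} from a coordinate where the dominating convex combination strictly improves, argue that every support matching must send $B$ onto $M(B)$ (so the improving vertex and its partner lie outside the block), and then pass to the reduced graph, your explicit induction on $|V_1|$ being just the unfolded form of the paper's minimal-counterexample argument. If anything, you are more careful than the paper on the degenerate cases (the base case, $B = V_1(M)$, and empty restrictions of support matchings); your appeal to \emph{positive weights} is an assumption the paper never states ($\bm{w} \in \mathbb{R}^{|E|}$), but the paper's own decoupling step implicitly relies on the same fact, so this does not separate the two arguments.
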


\begin{proof}
By contradiction, let $G' := (V_1' \cup V_2', E', \bm{w}')$ be a graph that contains a matching $M$ that is PO but not fPO with the smallest possible set $|V_1|$. Then, there exist a set $\mc{N}$ of matchings in $G'$ and vector $\bm{\alpha} \in \mathbb{R}^{|\mc{N}|}_{+}$ satisfying $\bm{\alpha}^\top \bm{1} = 1$ and $\sum\limits_{M' \in \mc{N}} \alpha_{M'} \bm{u}^{N} \succ \bm{u}^{M}$. Let $i \in V_1'$ be a vertex for which $\sum\limits_{M' \in \mc{M}} \alpha_{M'} u_{i}^{M'} > u_{i}^{M}$.

This implies the existence of a matching $M' \in \mc{M}$ and vertex $j \in V_2'$ satisfying $(i, j) \in M'$ and $w'_{i,j} > u_i^M$. By Lemma~\ref{lem:blocking}, there exists a blocking set $B \subseteq V'_1(M)$. By definition, for every $k \in B$ it holds that $w'_{k,l} \leq w_{k,M(k)}$ for all $l\in N(k)$ and $w'_{k,l} < u_k^M$ for every $l \in N(K)\setminus M(B)$, and thus every $k \in B$ can only be matched to some $l \in M(B)$ in every matching $M' \in \mc{M}$ for which $\alpha_{M'} > 0$.
Thus, $M'(B) = M(B)$ for all matchings $M'\in \mc{M}$ with $\alpha_{M'} > 0$. This also proves that $i\notin B$ and $j \notin M(B)$. Hence, we can construct a strictly smaller counterexample on $G'_{R}$. This shows that no smallest graph satisfying the conditions can exist, contradicting our initial assumption. We conclude that $M$ is fPO.
\end{proof}

\section{Complexity}
\label{sec:complexity}

We prove $\mathcal{NP}$-hardness of \textsc{Max-Pareto} through a reduction from \textsc{Constrained Pareto-Optimal Matching}, which was shown to be $\mathcal{NP}$-hard by \citet{saban2015complexity}. The goal of this problem is to determine whether a bipartite graph admits a Pareto-optimal matching in which a specific subset of objects is matched. To reduce this problem to \textsc{Max-Pareto}, we exploit that (i) bipartite matching can be formulated as a linear program with a totally unimodular constraint matrix and (ii) PO=fPO for bipartite matchings by Theorem~\ref{thm:fpo_matching}. 

We introduce some additional notation to formally define \textsc{Constrained Pareto-Optimal Matching}. Given a set of $n$ agents $\mc{A}$ and objects $\mc{O}$, a preference profile $P := (P_1, \dots, P_n)$ is a collection of strict orderings of objects. In particular, object $a$ preceding object $b$ in ordering $P_i$, denoted by $a \succ_{P_i} b$, indicates that agent $i$ prefers object $a$ over $b$. Profile $P_i$ need not be a complete ordering. In that case, objects not included in the ordering are said to be inadmissible to agent $i$. 

\textsc{Constrained Pareto-Optimal Matching} can now be stated as follows: Given a preference profile $P$ from agents $\mc{A}$ over objects $\mc{O}$, in which agents may have inadmissible objects, and a subset of objects $Q \subseteq \mc{O}$, does there exist a Pareto-optimal matching in which all objects in $Q$ are matched?

A reduction from the above problem allows us to prove the following hardness result.
\begin{thm}
The decision version of \textsc{Max-Pareto} is $\mc{NP}$-complete.
\end{thm}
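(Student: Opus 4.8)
The plan is to prove the two halves of $\mc{NP}$-completeness separately: membership in $\mc{NP}$, and $\mc{NP}$-hardness through the announced reduction from \textsc{Constrained Pareto-Optimal Matching} (CPOM). I take the decision version of \textsc{Max-Pareto} to ask, given the data $(A, \bm{b}, U, \bm{c})$ and a threshold $z$, whether there exists $\bm{x} \in \mc{X}_P$ with $\bm{c}^\top \bm{x} \geq z$.

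For membership, I would use a vertex $\bm{x}$ of $\mc{X}$ as the certificate; such a vertex has encoding size polynomial in the input by standard linear-programming theory. Verification amounts to checking feasibility $\bm{x} \in \mc{X}$, the bound $\bm{c}^\top \bm{x} \geq z$, and Pareto-optimality of $\bm{x}$. The last check is itself polynomial, since $\bm{x}$ fails to be Pareto-optimal precisely when the linear program $\max \{ \bm{1}^\top (U\bm{x}' - U\bm{x}) : \bm{x}' \in \mc{X},\ U\bm{x}' \geq U\bm{x}\}$ attains a strictly positive optimum. Lemma~\ref{lem:opt_vertex} guarantees that every YES-instance admits such a vertex certificate, placing the problem in $\mc{NP}$.

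For hardness, given a CPOM instance (a preference profile $P$ over objects with possibly inadmissible objects, and a target set $Q$), I would build a weighted bipartite graph $G = (V_1 \cup V_2, E, \bm{w})$ with $V_1$ the agents, $V_2$ the objects, an edge $(i,j)$ whenever $j$ is admissible to $i$, and strictly positive weights $w_{ij}$ chosen to strictly respect each agent's ordering (for instance, decreasing integer weights by rank). The associated polyhedron $\mc{X}$ is the bipartite matching polytope, whose constraint matrix is totally unimodular, so its vertices are exactly the integral matchings; the payoff map $U$ sends $\bm{x}$ to $\big(\sum_j w_{ij} x_{ij}\big)_i$, the objective $\bm{c}$ counts matched objects of $Q$, and the threshold is $z := |Q|$. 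The heart of the argument is a chain of equivalences for a matching $M$ with incidence vector $\bm{x}_M$: because the weights strictly encode the preferences (and a matched admissible object beats being unmatched, which has payoff $0$), $M$ is Pareto-optimal as an allocation iff $\bm{u}^M$ is undominated among matching payoffs in the sense of Definition~\ref{def:po_matching}; because $\mc{U} = U(\mc{X})$ is the convex hull of matching payoffs, $\bm{x}_M \in \mc{X}_P$ iff $M$ is fPO; and by Theorem~\ref{thm:fpo_matching}, fPO and PO coincide for bipartite matchings. Hence $\bm{x}_M \in \mc{X}_P$ exactly when $M$ is a Pareto-optimal allocation.

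It then remains to verify correctness of the reduction. In the forward direction, a Pareto-optimal matching covering $Q$ gives $\bm{x}_M \in \mc{X}_P$ with $\bm{c}^\top \bm{x}_M = |Q|$. Conversely, if $\max_{\bm{x} \in \mc{X}_P} \bm{c}^\top \bm{x} \geq |Q|$, then Lemma~\ref{lem:opt_vertex} supplies an optimal integral vertex $\bm{x}_{M^*} \in \mc{X}_P$, so $M^*$ is Pareto-optimal; since $\bm{c}^\top \bm{x}_{M^*} \leq |Q|$ always holds, equality forces $M^*$ to cover all of $Q$. The main obstacle, and precisely where Theorem~\ref{thm:fpo_matching} is indispensable, is this integral-versus-fractional gap: relaxing integrality enlarges the dominating set from matchings to their convex hull, so a priori a Pareto-optimal matching might be fractionally dominated and thus fall outside $\mc{X}_P$, severing the correspondence. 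The identity PO $=$ fPO is exactly what rules this out and makes the reduction valid.
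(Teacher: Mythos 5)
Your proposal is correct and follows essentially the same route as the paper: the identical reduction from \textsc{Constrained Pareto-Optimal Matching} via the bipartite matching polytope with rank-encoding weights, with Theorem~\ref{thm:fpo_matching} (PO $=$ fPO) closing the integral-versus-fractional gap in the forward direction, Lemma~\ref{lem:opt_vertex} plus total unimodularity handling the converse, and the same auxiliary linear program certifying Pareto-optimality for membership in $\mathcal{NP}$. Your write-up is in fact slightly more explicit than the paper's on two points the paper leaves implicit: the polynomial encoding size of the vertex certificate, and the equivalence chain showing $\bm{x}_M \in \mc{X}_P$ iff $M$ is fPO because $\mc{U}$ is the convex hull of the matching payoffs.
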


\begin{proof}
It is readily seen that \textsc{Max-Pareto} is in $\mc{NP}$, as membership of $\bm{x} \in \mathcal{X}$ can be easily confirmed, and Pareto-optimality can be checked in polynomial time by solving the following linear program:
\begin{subequations}
\begin{align}
\max \quad & \bm{e}^\top U \bm{y} \\
\text{s.t.} \quad & U \bm{y} \geq U \bm{x} \\ 
& \bm{y} \in \mc{X}.  
\end{align}
\end{subequations}
We now reduce \textsc{Constrained Pareto-Optimal Matching} to an instance of the decision version of \textsc{Max-Pareto}: Does there exist an $\bm{x} \in \mc{X}_{P}$ for which $\bm{c}^\top \bm{x} \geq k$? We construct this instance of $\textsc{Max-Pareto}$ as follows.
\begin{itemize}
    \item \textit{Feasible region:} Construct a bipartite graph $G := (\mc{A} \cup \mc{O}, E, \bm{w})$ as follows. Edge $e = (i, a)$ is included in $E$ if agent $i \in \mc{A}$ admits item $a \in \mc{O}$, and has weight $w_{(i, a)} := 1 + | \{ o \in \mc{O} : a \succ_{P_i} o\} |$. In other words, we choose weights that assign more value to objects higher up in the agent's preference profile. The bipartite graph is not necessarily complete. We now define $\mc{X} := \{ \bm{x} \in \mathbb{R}^{|E|}_{+} : \sum\limits_{e \in \delta(v)} x_e \leq 1, \forall v \in V_1 \cup V_2\}$ as the matching polytope of $G$, where $\delta(v)$ is the set of edges incident to $v$. Since $G$ is bipartite, the corresponding constraint matrix is totally unimodular and each vertex of the matching polytope corresponds to an integral matching.
    \item \textit{Payoff mapping:} There are $|\mc{A}|$ agents. For any $e = (i, a) \in E$, we set $U_{(i, e)} := w_e$. This payoff function ensures that an integral matching is Pareto-optimal in \textsc{Max-Pareto} if and only if it is Pareto-optimal in \textsc{Constrained Pareto-Optimal Matching}.
    \item \textit{Objective function:} Choose $\bm{c} \in \mbb{R}^{|E|}$ by setting $c_{(i, a)} := 1$ if $a \in Q$ and $c_{(i, a)} := 0$ otherwise. The objective is thus to maximise the number of matched objects in $Q$.
    \item \textit{Objective threshold:} The objective threshold equals $k := |Q|$.
\end{itemize} 
This instance of \textsc{Max-Pareto} is constructed in time polynomial in the size of the input. We now show that it is a yes-instance if and only if \textsc{Constrained Pareto-Optimal Matching} is a yes-instance.

$\Rightarrow$ First, suppose that \textsc{Constrained Pareto-Optimal Matching} is a yes-instance. Let $M$ be a Pareto-optimal matching containing all items in $Q$. Choose $\bm{x}$ by setting $x_{e} := 1$ if $e \in M$, and $x_e := 0$ otherwise. It is clear that $\bm{x}$ is feasible in $\mc{X}$ and meets the objective threshold. 

It remains to show that $\bm{x}$ satisfies Pareto-optimality in \textsc{Max-Pareto}. By construction, this is equivalent to showing that $M$ is fPO with respect to all matchings in $G$. This follows directly from Theorem~\ref{thm:fpo_matching} and the fact that $M$ is PO. We conclude that \textsc{Max-Pareto} is a yes-instance.

$\Leftarrow$ Now, suppose that \textsc{Max-Pareto} is a yes-instance with corresponding solution $\bm{x}$. We aim to convert this to a Pareto-optimal matching $M$ that matches all items in $Q$. By Lemma~\ref{lem:opt_vertex}, we know there exists an optimal vertex solution $\bm{x}'$. By total unimodularity of the bipartite matching polytope, this solution corresponds to an integral matching. This matching is not Pareto-dominated by any point in $\mc{X}$. In particular, it is not dominated by any integral matching. Since $\bm{c}^\top \bm{x} \geq k$, the matching contains all objects in $Q$. Hence, \textsc{Constrained Pareto-Optimal Matching} is a yes-instance.  

Since \textsc{Constrained Pareto-Optimal Matching} is $\mc{NP}$-hard and can be reduced to \textsc{Max-Pareto} in polynomial time, we conclude that \textsc{Max-Pareto} is $\mc{NP}$-hard as well.
\end{proof}
The structure of our proof shows that a large class of discrete allocation problems with Pareto-optimality constraints, such as that of \citep{biro2021complexity}, can be seen as special cases of \textsc{Max-Pareto}.

\section{Bilinear Programming Formulation}
\label{sec:formulation}

In this section, we provide a mathematical programming formulation for \textsc{Max-Pareto}. It is not trivial to express the Pareto-optimality condition, but Lemma~\ref{lem:support} provides a way forward. In particular, it allows us to formulate Pareto-optimality as a series of bilinear constraints.\footnote{Like \textsc{Max-Pareto}, linear programming with bilinear constraints is generally $\mc{NP}$-hard \citep{bennett1993bilinear}.} The bilinear programming formulation of \textsc{Max-Pareto} reads as follows: \begin{subequations}
\begin{align}
\max \quad & \bm{c}^\top \bm{x} \label{eq:obj} \\
\text{s.t.} \quad& \bm{u} = U \bm{x} \label{eq:u} \\
& \bm{w}^{\top} \bm{u} \geq  \bm{w}^\top \bm{u}' && \forall \bm{u}' \in \mc{U} \label{eq:pareto} \\  
& \bm{w} \geq \bm{1} \label{eq:domain_w} \\
& \bm{x} \in \mc{X} \label{eq:domain_x}. 
\end{align}
\label{eq:bilinear}
\end{subequations}
The objective function is given by~(\ref{eq:obj}). Constraints (\ref{eq:u}) ensure that $\bm{u}$ is a feasible payoff vector, while bilinear constraints (\ref{eq:pareto}) guarantee that $\bm{u}$ is supported by weight vector $\bm{w}$. Constraints~(\ref{eq:domain_w}) ensure that $\bm{w}$ meets the remaining conditions of Lemma~\ref{lem:support}. The feasible domain of $\bm{x}$ is given by (\ref{eq:domain_x}). It follows that $\bm{x} \in \mc{X}_P$ for any solution $(\bm{x}, \bm{u}, \bm{w})$ to (\ref{eq:bilinear}).

Bilinear problems can be solved to global optimality by modern-day commercial solvers like Gurobi. The exponential number of bilinear Pareto-optimality constraints (\ref{eq:pareto}) remains troublesome, however. A potential solution would be a cutting plane approach, where constraints (\ref{eq:pareto}) are dynamically separated. 

Alternatively, we can use strong LP duality to obtain a tractable reformulation containing only a single bilinear constraint. Since $\mc{U} = \{ U \bm{x} : \bm{x} \in \mc{X}\}$, constraints (\ref{eq:pareto}) are equivalent to 
\begin{subequations}
\begin{align}
\bm{w}^\top \bm{u} & \geq \bm{w}^\top \bm{U \bm{x}} && \forall \bm{x} \in \mc{X} \label{eq:reformulate_a} \\
\iff \bm{w}^\top \bm{u} & \geq \max_{\bm{x} \in \mc{X}} \{ \bm{w}^\top U \bm{x} \} \label{eq:reformulate_b} \\ 
\iff \bm{w}^\top \bm{u} & \geq \max_{\bm{x} \in \mathbb{R}^k} \{ \bm{w}^\top U \bm{x} : A \bm{x} \leq \bm{b} \}.  \label{eq:reformulate_c}
\end{align}
\end{subequations}
Since $\mc{X}$ is nonempty and bounded, we can apply strong linear programming duality. Treating $\bm{w}$ as fixed, we obtain that (\ref{eq:reformulate_c}) is equivalent to 
\begin{align}
\bm{w}^{\top} \bm{u} \geq \min_{\bm{\eta} \in \mathbb{R}^{m}_{+}} \{ \bm{b}^\top \bm{\eta} : A^\top \bm{\eta} = U^\top \bm{w} \}. \label{eq:reformulate_d}
\end{align}
Finally, note that (\ref{eq:reformulate_d}) can be satisfied if and only if there exists a solution to the following system of equations:
\begin{subequations}
\begin{align}
&\bm{w}^\top \bm{u} \geq \bm{b}^\top \bm{\eta} \label{eq:cons_bilinear} \\
& A^\top \bm{\eta} = U^\top \bm{w} \\ 
& \bm{\eta} \geq \bm{0}. 
\end{align}
\label{eq:reformulate_e}
\end{subequations}
To conclude, replacing (\ref{eq:pareto}) by (\ref{eq:reformulate_e}) yields a tractable reformulation with exactly one constraint containing $n$ bilinear terms. Preliminary computational experiments show that this indeed outperforms a cutting plane approach.

\subsection{Bounded Weight Heuristic}

Ideally, one would like to place a reasonable upper bound $\bm{w} \leq \bm{\bar{w}}$ on the weight vector entries. This would strengthen the formulation, and, most importantly, help avoid numerical issues where excessively large weight vectors are chosen to exploit the feasibility tolerance of a solver. Unfortunately, we are unable to derive a meaningful valid upper bound on $\bm{w}$ that preserves global optimality. As the following result shows, the magnitude of weight vector entries required to support a Pareto-optimal point can grow exponentially in the input size. 

\begin{prop}
For every integer $n \geq 2$, there exist a complete weighted bipartite graph $G$ with $2n$ nodes and weights in $[0, n]$, and a Pareto-optimal matching $M$ in $G$ that can only be supported by a weight vector $\bm{w}$ satisfying $\frac{w_1}{w_n} \geq (n-1)^{n-1}$. 
\end{prop}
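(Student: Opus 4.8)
The plan is to construct an explicit family of instances and read the bound off a chain of necessary support inequalities. Fix $n$ and take the complete bipartite graph on $V_1=\{1,\dots,n\}$ and $V_2=\{1,\dots,n\}$ with the distinguished perfect matching $M$ given by $M(i)=i$. The key observation is that a supporting weight vector $\bm{w}=(w_1,\dots,w_n)$ must, by definition, make $\bm{u}^{M}$ maximise $\sum_i w_i u_i$ over all matchings; in particular it must beat every single pairwise swap. I would engineer the edge weights so that the swap exchanging the objects of the consecutive agents $i$ and $i+1$ yields precisely the inequality $w_i\ge (n-1)\,w_{i+1}$. Chaining these $n-1$ necessary inequalities then gives $w_1/w_n=\prod_{i=1}^{n-1}(w_i/w_{i+1})\ge (n-1)^{n-1}$, which is the claim.

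Concretely, I would assign edge weights (payoffs) $w_{i,i}=1$ on the diagonal, $w_{i,i-1}=n$ on the subdiagonal, and $w_{i,j}=0$ on every remaining edge, so that all weights lie in $[0,n]$ and every agent earns payoff $1$ under $M$. For the lower bound only the swap $M'$ with $M'(i)=i+1$, $M'(i+1)=i$ (and $M'$ agreeing with $M$ elsewhere) is needed: the support inequality $w_i u_i^M+w_{i+1}u_{i+1}^M\ge w_i\,w_{i,i+1}+w_{i+1}\,w_{i+1,i}$ becomes $w_i+w_{i+1}\ge n\,w_{i+1}$ after substituting $w_{i,i+1}=0$ and $w_{i+1,i}=n$, i.e.\ $w_i\ge (n-1)w_{i+1}$. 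Since this inequality is \emph{necessary} for any supporting $\bm{w}$ and does not depend on the remaining edge weights, the product bound follows immediately.

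It then remains to confirm that $M$ is genuinely Pareto-optimal, which both meets the hypothesis of the proposition and, via Theorem~\ref{thm:fpo_matching} together with Lemma~\ref{lem:support}, guarantees that a supporting weight vector actually exists (so the claim is not vacuous). Here the zeroed-out ``far'' entries do the work: any matching $M'$ with $u_i^{M'}\ge 1$ for all $i$ must satisfy $M'(i)\in\{i,i-1\}$ for every $i$, since all other edges pay $0$. An easy induction starting from agent $1$ --- who cannot take the non-existent object $0$ and is therefore forced onto object $1$ --- shows the identity is the only such matching, so no matching weakly dominates $M$ with a strict gain anywhere.

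The main obstacle is conceptual rather than computational: forcing an exponentially large ratio $w_1/w_n$ seems to demand large payoff differences, yet the payoffs are capped at $n$. The resolution, which is the crux of the construction, is that each step needs only a \emph{constant} multiplicative push of $n-1$ --- a payoff gap of $n-1$ played against a gap of $1$ --- and it is the multiplicative chaining of $n-1$ such modest steps that produces the exponential blow-up. The delicate point to get right is ensuring simultaneously that the simple pairwise-swap inequalities are the relevant binding constraints and that they remain compatible with Pareto-optimality of $M$; the sparse, mostly-zero payoff matrix is what reconciles these two demands.
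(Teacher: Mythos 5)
Your proposal is correct and uses essentially the same construction as the paper: the identical sparse payoff matrix (diagonal entries $1$, subdiagonal entries $n$, zero elsewhere), the identical adjacent-swap support inequalities $w_i \geq (n-1)\,w_{i+1}$, and the same multiplicative chaining to obtain $w_1/w_n \geq (n-1)^{n-1}$. Your explicit verification that the diagonal matching is Pareto-optimal (and your appeal to Theorem~\ref{thm:fpo_matching} and Lemma~\ref{lem:support} for non-vacuity) fills in a step the paper only asserts, but the argument is otherwise the same.
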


\begin{proof}
Take a complete bipartite graph from $n$ to $n$ nodes with edge weights defined as $e_{ij} = 1$ if $i=j$, $e_{ij} = n$ if $i = j + 1$, and $e_{ij} = 0$ otherwise. The all-diagonal matching, including edges $(i, i)$ for all $i$, has payoff vector $\bm{u} = (1, \dots, 1)$ and is Pareto-optimal. As such, there exists a weight vector $\bm{w} \in \mathbb{R}_{++}^{n}$ satisfying $\bm{w}^\top \bm{u} \geq \bm{w}^\top \bm{v}$ for all alternative payoff vectors $\bm{v}$. In particular, it holds for all payoff vectors $\bm{v}_k = (1, \dots, 1, 0, n, 1, \dots, 1)$, $k \geq 2$, obtained by swapping edges $(k - 1, k - 1)$ and $(k, k)$ in the all-diagonal matching with $(k - 1, k)$ and $(k, k - 1)$. As such, $\bm{w}$ must satisfy 
\begin{align}
    \sum_{i=1}^{n} w_i \geq \sum_{i=1}^{k - 2} w_i + n w_k + \sum_{i=k+1}^{n} w_i
\end{align}
for all $k \geq 2$. Combining these inequalities yields $w_i \geq (n-1) w_{i+1}$ for all $i=1,\ldots,n-1$. We conclude that $w_1 \geq (n-1)^{n-1} w_n$. 
\end{proof}

Despite this negative result, our bilinear program can be used as a primal heuristic by still enforcing an upper bound on $\bm{w}$. This restricts the feasible region to a subset of the Pareto-frontier that is still supported by the set of bounded weight vectors. However, one no longer obtains a valid upper bound to \textsc{Max-Pareto}. Even when the resulting weight vector is not at bound, one cannot conclude global optimality due to non-convexity of the bilinear constraint (\ref{eq:cons_bilinear}).

Of course, more sophisticated heuristics for \textsc{Max-Pareto} can be devised. For example, one could exploit that, for any fixed weight vector, feasible, Pareto-optimal solutions can be obtained by solving an ordinary linear program. This potentially paves the way for meta-heuristics that explore the space of weight vectors in an efficient manner.

\section{Computational Experiments}
\label{sec:experiments}

We evaluate the computational performance of our bilinear programming formulation on the $\mc{NP}$-hard problem of finding Pareto-optimal allocations of highest welfare as introduced by \citet{biro2021complexity}. Consider a set of agents $\mc{A}$ and items $\mc{I}$. Allocating item $i \in \mc{I}$ to agent $a \in \mc{A}$ yields welfare $w_{ia}$ and a payoff $u_{ia}$ to agent $a$. We can allocate at most one item to each agent and each item to at most one agent. An allocation is Pareto-optimal if no agent can achieve a strictly higher payoff without making at least one other agent strictly worse off. Our goal is to find a Pareto-optimal allocation of highest welfare.

To model this problem using the bilinear programming formulation of Section~\ref{sec:formulation}, we set $\mc{X}$ equal to the bipartite matching polytope. We benchmark this approach against the integer linear programming (ILP) model presented by \citet{biro2021complexity}. This model is based on the observations that (i) an allocation is Pareto-optimal if and only if the allocation is supported by a price vector at competitive equilibrium and (ii) the conditions for such an equilibrium can be written as a set of linear constraints. 

We generate random instances with number of agents $|\mc{A}| \in \{10, 25, 50, 75, 100\}$ and a number of items $|\mc{I}|$ equal to either 1, 2, 5, or 10 times the number of agents. We assume that all items are admissible to all agents. The welfare $w_{ia}$ and payoff $u_{ia}$ of allocating item $i$ to agent $a$ are randomly sampled from $\{1, \dots, |\mc{I}|\}$. Both models are implemented in Java and solved using Gurobi 12.03 with a time limit of ten minutes. In the bilinear programming formulation, we experiment with three different values for the maximum weight vector entries, using $\bm{\bar{w}} \in \{|\mc{I}|/2, |\mc{I}|, 2 |\mc{I}|\}$. Note that, as a result, our method is a heuristic and no longer provides valid upper bounds. All experiments are conducted on a personal computer with a 2.40Ghz Intel Core i7-13700H processor and 32GB RAM. 

Table~\ref{table:results} reports the results of our computational experiments. For each instance and formulation, it reports the objective value of the best found feasible solution (LB), an upper bound on the objective value (UB), and the computing time. A $*$ indicates that the time limit of ten minutes was reached. For each instance, the best found feasible solution is indicated in bold. 

\begin{table}[htbp!]
  	\centering 
  	\renewcommand{\arraystretch}{1}
  	\caption{Computational results. The best found solution of each instance is indicated in bold. A $*$ indicates the time limit was reached.}
    \label{table:results}
  	\begin{adjustbox}{max width= \textwidth}
  	\begin{tabular}{@{\extracolsep{0pt}}llrrrrrrrrrrrr} 
		\toprule 
		& & \multicolumn{3}{c}{ILP} & \multicolumn{3}{c}{Bilinear ($\bar{w} = |\mc{I}|/2$)} & \multicolumn{3}{c}{Bilinear ($\bar{w} = |\mc{I}|$)} & \multicolumn{3}{c}{Bilinear ($\bar{w} = 2|\mc{I}| $)} \\
		\cmidrule(l){3-5} \cmidrule(l){6-8} \cmidrule(l){9-11} \cmidrule(l){12-14} 
        Agents & Items & LB & UB & Time (s) & LB & UB & Time (s) & LB & UB & Time (s) & LB & UB & Time (s) \\ 
        \midrule
        \multirow{4}{*}{10} & 10 & \textbf{69} & 69 & 0 & 66 & 66.3 & $*$ & 67 & 67 & 53 & \textbf{69} & 69 & 154 \\ 
		& 20 & \textbf{142} & 142 & 0 & \textbf{142} & 142 & 9 & \textbf{142} & 142 & 0 & \textbf{142} & 142 & 2 \\
        & 50 & \textbf{233} & 233 & 0 & 224 & 224 & 0 & 224 & 224 & 0 & 224 & 224 & 0 \\
        & 100 & \textbf{540} & 540 & 0 & \textbf{540} & 540 & 0 & \textbf{540} & 540 & 0 & \textbf{540} & 540 & 0 \\ 
        \midrule
		\multirow{4}{*}{25} & 25 & \textbf{498} & 498 & 96 & 463 & 545.8 & $*$ & 459 & 558.0 & $*$ & 472 & 569.4 & $*$ \\ 
		& 50 & \textbf{651} & 959 & $*$ & 638 & 638 & 226 & 638 & 638 & 2 & 638 & 638 & 4 \\
        & 125 & \textbf{1,635} & 2,869 & $*$ & 1,615 & 1,615 & 0 & 1,615 & 1,615 & 0 & 1,615 & 1,615 & 0 \\
        & 250 & \textbf{3,325} & 5,814 & $*$ & \textbf{3,325} & 4,495 & $*$ & \textbf{3,325} & 4,913.3 & $*$ & \textbf{3,325} & 5,323.2 & $*$ \\ 
        \midrule
		\multirow{4}{*}{50} & 50 & \textbf{1,908} & 2,323 & $*$ & 1,481 & 2,389.2 & $*$ & 1,495 & 2,434.2 & $*$ & 1,448 & 2,449.0 & $*$ \\ 
		& 100 & 3,148 & 4,775 & $*$ & 3,136 & 4,443.0 & $*$ & 3,136 & 4,619.6 & $*$ & \textbf{3,316} & 4,750.7 & $*$ \\
        & 250 & \textbf{6,945} & 12,156 & $*$ &\textbf{ 6,945} & 10,597.0 & $*$ & \textbf{6,945} & 11,180.3 & $*$ & \textbf{6,945} & 12,033.9 & $*$ \\
        & 500 & \textbf{14,470} & 24,418 & $*$ & \textbf{14,470} & 23,714.3 & $*$ & \textbf{14,470} & 24,327.7 & $*$ & 14,284 & 24,608.3 & $*$ \\ 
        \midrule
		\multirow{4}{*}{75} & 75 & \textbf{4,560} & 5,350 & $*$ & 0 & 5,485.8 & $*$ & 3,660 & 5,521.9 & $*$ & 3,554 & 5,536.3 & $*$ \\ 
		& 150 & \textbf{6,642} & 11,041 & $*$ & 6,144 & 10,710.4 & $*$ & 5,950 & 10,992.3 & $*$ & 6,475 & 11,121.3 & $*$ \\
        & 375 & \textbf{17,091} & 27,694 & $*$ & \textbf{17,091} & 17,742.2 & $*$ & \textbf{17,091} & 17,134.9 & $*$ & \textbf{17,091} & 17,669.4 & $*$ \\
        & 750 & 30,629 & 56,115 & $*$ & \textbf{31,145} & 31,493 & $*$ & \textbf{31,145} & 32,564.1 & $*$ & 31,139 & 48,253.3 & $*$ \\ 
        \midrule
		\multirow{4}{*}{100} & 100 & \textbf{7,278} & 9,631 & $*$ & 0 & 9,817.2 & $*$ & 0 & 9,866.7 & $*$ & 0 & 9,887.7 & $*$ \\ 
		& 200 & \textbf{12,297} & 19,732 & $*$ & 11,290 & 19,414.6 & $*$ & 11,656 & 19,620.1 & $*$ & 11,444 & 19,805.3 & $*$ \\
        & 500 & \textbf{28,160} & 49,507 & $*$ & 27,187 & 45,742.9 & $*$ & 27,187 & 46,467.7 & $*$ & 27,181 & 48,517.1 & $*$ \\
        & 1000 & 53,118 & 99,935 & $*$ & 55,240 & 94,771.8 & $*$ & \textbf{55,290} & 97,563.4 & $*$ & \textbf{55,290} & 97,887.1 & $*$ \\ 
		\bottomrule 
	\end{tabular} 
  	\end{adjustbox}
\end{table}

The results in Table~\ref{table:results} show that the problem-specific ILP of \citep{biro2021complexity} is highly competitive, obtaining the best solutions on most instances and proving optimality on the five smallest ones. Because of the heuristic choice of $\bm{\bar{w}}$, the true optimum can lie outside the feasible region of the bilinear models. For example, with $10$ agents and $50$ items the ILP proves optimality of a solution with value 223, while all bilinear models terminate below this value. On instances with many agents, the bilinear formulations sometimes fail to return primal solutions within the time limit. Still, our generic formulations outperform the ILP on several instances with many items, such as the one with 75 agents and 750 items. As expected, the invalid upper bounds of the bilinear models grow with $\bm{\bar{w}}$, but this does not yield a consistent improvement in primal solutions. Finally, the results confirm the difficulty of maximizing welfare among Pareto-optimal allocations, as no method terminates within the time limit once the number of agents reaches $50$.

\bibliographystyle{abbrvnat}
\bibliography{references}

\end{document}